\documentclass[10pt]{amsart}
\usepackage{amsmath,amssymb,amsthm}
\usepackage{bbm}
\usepackage{graphicx,tikz,mathtools}
\usepackage[left=2.5cm,right=2.5cm,top=2.5cm,bottom=2.5cm]{geometry}
\usepackage{hyperref}
\usepackage{cite}
\usepackage{mathrsfs} 
\usepackage{verbatim}
\usepackage{caption}
\usepackage{subcaption}
\usepackage{graphicx}
\hypersetup{
	colorlinks=true,
	linkcolor=blue, 
	citecolor=blue,
	filecolor=blue,
	urlcolor=blue,
}

\makeatletter
\@namedef{subjclassname@2020}{\textup{2020} Mathematics Subject Classification}
\makeatother

\newtheorem{theorem}{Theorem}[section]
\newtheorem{proposition}[theorem]{Proposition}
\newtheorem{corollary}[theorem]{Corollary}

\newtheorem{lemma}[theorem]{Lemma}
\newtheorem*{conjecture*}{Conjecture}

\newcommand{\triple}[1]{{\left\vert\kern-0.25ex\left\vert\kern-0.25ex\left\vert #1
        \right\vert\kern-0.25ex\right\vert\kern-0.25ex\right\vert}}

\theoremstyle{definition}

\newtheorem{remark}[theorem]{Remark}

\newcommand{\al}{\alpha}
\newcommand{\be}{\beta}

\newcommand{\ep}{\varepsilon}

\newcommand{\la}{\lambda}

\newcommand{\te}{\theta}
\newcommand{\vp}{\varphi}

\newcommand{\La}{\Lambda}

\def\RR{\mathbb{R}}

\def\HH{\mathbb{H}}

\newcommand{\cD}{{\mathcal D}}

\newcommand{\cF}{{\mathcal F}}

\newcommand{\cJ}{{\mathcal J}}
\newcommand{\cL}{{\mathcal L}}

\newcommand{\cN}{{\mathcal N}}

\newcommand{\cX}{{\mathcal X}}

\newcommand{\pd}{\partial}
\newcommand\minus\backslash

\newcommand\lan\langle
\newcommand\ran\rangle

\newcommand{\dd}{{\mathrm d}}

\newcommand{\norm}[1]{\left\lVert#1\right\rVert}

\renewcommand\leq\leqslant
\renewcommand\geq\geqslant

\numberwithin{equation}{section}

\newcommand{\R}{\mathbb R}
\newcommand{\C}{\mathbb C}
\newcommand{\D}{\mathcal D}
\newcommand{\pv}{\operatorname{p.v.}}
\newcommand{\mi}{\int_{(0,\infty)}}

\allowdisplaybreaks[2]
\begin{document}
 
\title[H\"older estimates for the CCF Equation]{H\"older estimates up to the singular time\\ for the C\'ordoba--C\'ordoba--Fontelos equation}

\author[\'A. Castro]{\'{A}ngel Castro}
\address{    
\newline
\textbf{{\small \'{A}ngel Castro}}
\vspace{0.15cm}
\newline \indent Instituto de Ciencias Matem\'aticas, Consejo Superior de Investigaciones Cient\'\i ficas, 28049 Madrid, Spain}
\email{angel\_castro@icmat.es}

\author[A. Enciso]{Alberto Enciso}
\address{   \vspace{-0.4cm} 
\newline
\textbf{{\small Alberto Enciso}} 
\vspace{0.15cm}
\newline \indent Instituto de Ciencias Matem\'aticas, Consejo Superior de Investigaciones Cient\'\i ficas, 28049 Madrid, Spain}
\email{aenciso@icmat.es}

\author[A. J. Fern\'andez]{Antonio J.\ Fern\'andez}
\address{ \vspace{-0.4cm}
\newline 
\textbf{{\small Antonio J. Fern\'andez}} 
\vspace{0.15cm}
\newline \indent Departamento de Matem\'aticas, Universidad Aut\'onoma de Madrid, 28049 Madrid, Spain}
 \email{antonioj.fernandez@uam.es}

%
%

\begin{abstract}
We prove that the solution to the C\'ordoba--C\'ordoba--Fontelos equation
\(\theta_t-u\theta_x=0\), \(u=H\theta\), on the real line with analytic
initial datum \(\theta_0(x)=(1+x^2)^{-1}\) develops a finite-time
singularity while remaining uniformly bounded in \(C^{2/5}\). We also prove that, as the solution approaches the
singular time, the \(C^{2/3}\) seminorms of both \(\theta\) and \(u\)
diverge. The main ingredient is an integral representation in terms of a
probability measure that is preserved by the nonlinear evolution. From
this representation we derive a pointwise quadratic estimate for the key
nonlinear term in the equation for \(u\), using a comparison of positive
kernels. Together with evolution identities for the moments of the
measure, this estimate yields both the uniform H\"older bound and the
divergence of the higher seminorms. \vspace{-0.5cm}
\end{abstract}
 
\maketitle

\section{Introduction}

We consider the C\'ordoba--C\'ordoba--Fontelos equation
\begin{equation} \label{eq:CCF} \tag{CCF}
\left\{
\begin{aligned}
\, & \te_t - u \te_x = 0 \quad && \textup{in } \R \times (0,T)\,, \\
& u = H \te && \textup{in } \R \times [0,T)\,,\\
& \te(0,\cdot) = \te_0 && \textup{in } \RR\,.
\end{aligned}
\right.
\end{equation}
Here, $H$ is the Hilbert transform with kernel $1/(\pi(x-y))$, so the
transport velocity is $-u$, and $\te_0$ is a bounded, smooth, decaying function.  \eqref{eq:CCF} belongs to a family of one-dimensional nonlocal
transport models studied by Baker, Li and Morlet~\cite{BLM} in connection
with vortex-sheet dynamics. C\'ordoba, C\'ordoba and Fontelos~\cite{CCF}
proved finite-time singularity formation for smooth,
nonnegative, even, compactly supported data with a maximum at the
origin (see also \cite{CCF2}). Their proof uses a weighted nonlinear inequality for the
Hilbert transform to force concentration. Kiselev~\cite{Kiselev}
developed a real-variable approach under symmetry and monotonicity
assumptions, and Silvestre and Vicol~\cite{SV} gave four blow-up
arguments, based on a nonlocal identity, telescoping estimates,
a De Giorgi-type iteration, and barriers. Finally, Li and Rodrigo~\cite{LR2020} obtained further pointwise Hilbert-transform inequalities, an elementary proof of the weighted inequality of~\cite{CCF}, and weighted and nonlinear extensions.

All these results establish loss of smoothness in finite time, but leave open the finer question of what regularity persists. Understanding precisely which norms blow-up when a singularity forms, and which remain uniformly bounded, is a central question in the analysis of nonlinear evolution equations. For nonlocal transport equations such as \eqref{eq:CCF},
identifying this regularity threshold requires a quantitative information on the concentration mechanism that
a blow-up criterion alone does not provide. Silvestre and
Vicol~\cite[Section~7]{SV} proposed an a priori $C^{1/2}$ estimate at positive times for bounded solutions and formulated a corresponding conjecture for vanishing-viscosity limits. They also proved a stationary counterpart under symmetry and monotonicity assumptions. However, controlling the evolution up to the singular time requires information that a stationary cusp or a blow-up criterion alone do not provide.

Our main result gives both upper bounds and divergence estimates for
H\"older norms of the solution with the explicit, analytic, decaying datum
$\theta_0(x)=(1+x^2)^{-1}$. The argument follows this solution throughout
its maximal smooth lifespan and does not assume self-similarity.

\begin{theorem}\label{thm:intro}
Let $\theta_0(x) = (1+x^2)^{-1}$. Then, there exists a unique maximal smooth solution to \eqref{eq:CCF} which, for every integer $k \geq 0$, satisfies
$
\te, u \in C([0,T); H^k(\RR)).
$
Moreover, it follows that:
\begin{itemize}
\item[(i)] $4/3 \leq T \leq 2.$ \smallbreak
\item[(ii)] For every $0 < \alpha \leq 2/5$,
$$
\sup_{0 \leq t < T} \|\te(t,\cdot)\|_{C^{\al}(\RR)} + \sup_{0 \leq t < T} \|u(t,\cdot)\|_{C^{\al}(\RR)} < \infty.
$$
\item[(iii)] For every $2/3 \leq \be < 1$, 
$$
\lim_{t \uparrow T} [\te(t,\cdot)]_{C^{\be}(\RR)} = \lim_{t \uparrow T} [u(t,\cdot)]_{C^{\be}(\RR)} = + \infty.
$$
In particular, it follows that
$$
\lim_{t \uparrow T} \|\te_x(t,\cdot)\|_{L^{\infty}(\RR)} = + \infty\,.
$$
\end{itemize}
\end{theorem}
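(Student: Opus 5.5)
The plan is to reduce everything to the evolution of a probability measure on $(0,\infty)$ that encodes the solution through Poisson kernels. Since $\theta_0(x)=(1+x^2)^{-1}$ is $\pi$ times the Poisson kernel at height $s=1$, I would look for the solution in the form
\[
\theta(t,x)=\int_{(0,\infty)} \frac{s}{x^2+s^2}\,\frac{\dd\mu_t(s)}{s}\cdot s^{0}, \qquad u(t,x)=\int_{(0,\infty)} \frac{x}{x^2+s^2}\,\frac{\dd\mu_t(s)}{s}\cdot s^{0},
\]
with $\mu_0=\delta_1$, normalized appropriately. The defining feature is that $u+i\theta$ is the boundary value of a function analytic in the upper half-plane. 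The weighting should be fixed so that $\mu_t$ stays a probability measure; this is consistent with the fact that $\theta(t,0)$ is conserved, because $u(t,0)=0$ by oddness.

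First, I would establish the basic setup. This consists of the standard local well-posedness in $H^k$ for all $k$, together with a continuation criterion of Beale--Kato--Majda type (blow-up iff $\int_0^T\|\theta_x\|_{L^\infty}\,\dd t=\infty$), propagation of evenness of $\theta$ and oddness of $u$, and positivity. Next, I would derive the representation by plugging the Poisson ansatz into \eqref{eq:CCF}. I expect the nonlinearity $u\theta_x$ to close on Poisson kernels after using the identity for products of Cauchy kernels $\frac{1}{(x+is)(x+is')}$. This yields a continuity/transport equation for $\mu_t$ on $(0,\infty)$ in which mass is pushed toward $s=0$. The singular time $T$ is then the first time at which the relevant moment of $\mu_t$ near $s=0$ (equivalently $\|\theta_x\|_\infty$, or $u_x(t,0)$) becomes infinite.

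Then I would write evolution identities for the negative moments $M_\gamma(t)=\int s^{-\gamma}\,\dd\mu_t(s)$. Their right-hand sides involve the key nonlinear term from the $u$-equation, which is a double integral against $\mu_t\otimes\mu_t$ with a positive kernel. Comparing that kernel pointwise with products $s^{-a}s'^{-b}$ should give a quadratic estimate of the form $\dot M_\gamma \lesssim M_\gamma^2$ or $\dot M_\gamma\gtrsim M_\gamma^{2}$ for the relevant $\gamma$. Using $M_0,M_1$ conserved and Jensen/Hölder interpolation, the resulting Riccati-type ODE inequalities for the second moment ($\sim -u_x(t,0)$) give the bounds $4/3\le T\le2$ in (i).

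For (ii), I would use
\[
\theta(0)-\theta(x)=\int \frac{x^2}{s(x^2+s^2)}\,\dd\mu_t\le |x|^{\alpha}\int s^{-1-\alpha}\,\dd\mu_t ,
\]
together with the analogous estimate for $u$ and for differences at general points, where the kernel gradient is controlled by the same moments. This reduces (ii) to showing that $M_{1+\alpha}(t)$ stays bounded up to $T$ for $\alpha\le 2/5$. The main obstacle is the quadratic kernel-comparison estimate that makes this moment remain bounded even though $M_2$ blows up. The plan there is to find a pointwise inequality bounding the nonlinear kernel by a combination whose integral against $\mu_t\otimes\mu_t$ is $\lesssim \dot{(\text{something integrable})}$, e.g. $\dot M_{7/5}\lesssim M_{7/5}^{a}\,|\dot M_1|$-type terms, or a time integral of $M_2^{1/2}$ that is finite by the blow-up rate $M_2\sim (T-t)^{-1}$ obtained from the Riccati inequalities. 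The exponent $2/5$ should come out of balancing these powers.

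For (iii), I would argue by contradiction. If $[\theta(t)]_{C^{\beta}}$ stayed bounded along a sequence with $\beta\ge2/3$, then the lower bound $\theta(0)-\theta(x)\gtrsim$ a truncated moment would give $\int s^{-1-\beta}\mathbbm 1_{s>|x|}\,\dd\mu_t$ bounded uniformly. I would combine this with the lower Riccati bound forcing mass of $\mu_t$ to concentrate at scale $s\sim (T-t)^{c}$ with a quantified amount. The exponent $2/3$ is where the concentration rate beats the bound, and this yields divergence. The $u$ statement follows in the same way from the odd kernel, and the $L^\infty$ blow-up of $\theta_x$ is immediate from (iii) by interpolation with the bounded $L^\infty$ norm.
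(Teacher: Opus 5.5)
There is a genuine gap. Your representation is the right one (in the probability normalization $\theta=\int p_a\,\dd\mu_t(a)$, $p_a=a^2/(a^2+x^2)$), and reducing H\"older bounds to how much mass $\mu_t$ puts near $a=0$ is sound. But item (ii), the heart of the theorem, has no working mechanism in your plan.

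Negative moments cannot simply be propagated. With $P:=1-\theta$ one has $P_t=uP_x$, so for $0<\gamma<2$ the derivative $\frac{\dd}{\dd t}\int a^{-\gamma}\dd\mu_t$ is a positive multiple of $\int_0^\infty uP_x\,x^{-1-\gamma}\dd x\ge0$. The elementary bound $0<u\le mx$ then only gives $\frac{\dd}{\dd t}\int a^{-\gamma}\dd\mu_t\le\gamma m\int a^{-\gamma}\dd\mu_t$, which is useless because $\int_0^T m\,\dd t=\infty$. The hoped-for inequality $\dot M_{7/5}\lesssim M_{7/5}^{a}|\dot M_1|$ is not derived from anything. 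At the endpoint your reduction also asks for too much: a uniform $C^{2/5}$ bound on $\theta$ is equivalent to $\mu_t((0,\varepsilon])\lesssim\varepsilon^{2/5}$, which is strictly weaker than $\sup_t\int a^{-2/5}\dd\mu_t<\infty$.

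The missing idea is a pointwise inequality between $u$ and $P$ that is propagated along characteristics. From the kernel identity for $\mathcal N(u)=H(uHu_x)+uu_x$, the paper derives the one-sided bound $\mathcal N(u)\ge u^2/(4x)$ for $x>0$. With $D=\partial_t-u\partial_x$ one has $DP=0$, $Dx=-u$ and $Du=-\mathcal N(u)$. Hence the quotient $R=u\,x^{-1/4}P^{-3/8}$ satisfies $DR\le0$, with $R(0,\xi)=(1+\xi^2)^{-5/8}\le1$, so $u\le x^{1/4}P^{3/8}$ for all $t<T$. Combined with $0\le P_x\le u/x$, this gives $(P^{5/8})_x\le\frac58x^{-3/4}$, hence $P,u\lesssim x^{2/5}$ and $|\theta_x|,|u_x|\lesssim|x|^{-3/5}$; these integrate to the uniform $C^{2/5}$ bound. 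The exponent $2/5=\frac{1/4}{1-3/8}$ comes from the constant $1/4$ in the force bound and from the largest power of $P$ that keeps the initial quotient bounded. It does not come from balancing moment growth rates.

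The other steps also lack the decisive idea.

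Positivity of $\mu_t$ does not come from a continuity equation. Already at $t=0$ one has $\partial_t\theta(0,x)=-2x^2/(1+x^2)^3=\tfrac14(\partial_a^2-\partial_a)p_a(x)\big|_{a=1}$, which is not of the form $v\,\partial_ap_a+g\,p_a$ at $a=1$. So the induced equation for $\mu_t$ is second order and nonlocal in $a$, and nothing in your plan shows it preserves nonnegativity. The paper freezes $u$, passes to $r=x^2$, and shows that the backward characteristics of $Q_t=2rB(t,r)Q_r$ extend holomorphically to $\mathbb C\setminus(-\infty,0]$ and preserve $\mathbb H$ and $(0,\infty)$. Thus $Q_0\circ\Phi_t$ is again a normalized Stieltjes function, and Picard iteration with moment bounds plus Prokhorov compactness transfers this to the nonlinear solution.

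For (i), $\lesssim$-type Riccati inequalities cannot produce $4/3\le T\le2$. You need the exact identity $m'=\frac12m^2+I$ with $0<I=\iint(a+b)^{-2}\,\dd\mu_t(a)\,\dd\mu_t(b)\le\frac14m^2$. Also, $M_1$ is not conserved in either reading of your normalization: $\int a\,\dd\mu_t=\|\theta\|_{L^1}/\pi$ decreases, and $\int a^{-1}\dd\mu_t=u_x(t,0)$ blows up.

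For (iii), a large $m$ alone does not force mass at a quantifiable scale; you need an upper bound on concentration. The paper gets it from $K=-\theta_{xx}(t,0)=2\int a^{-2}\dd\mu_t$ and the exact ODE $K'=2mK$, which give $\frac{\dd}{\dd t}\log(K/m^4)=-4I/m\le0$. The interpolation $\min\{[\theta]_{C^\beta},[u]_{C^\beta}\}\gtrsim m^{2-\beta}K^{\beta-1}\gtrsim m^{3\beta-2}$ then yields divergence only for $\beta>2/3$. At $\beta=2/3$ the powers balance exactly, so power counting cannot reach the endpoint. The paper uses the additional layer-cake bound $I\ge m^2/(8\log(16J/m^2))$, $J=K/2$, to get $K\le2m^4(\log(4m)/\log4)^{-1/3}$ and a $(\log m)^{1/9}$ divergence.

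Your interpolation argument deducing $\|\theta_x\|_{L^\infty}\to\infty$ from (iii) is fine.
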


\begin{remark}\label{lem:parity} 
If $\theta_0$ is even, then, for every $t \in (0,T)$, the unique solution to \eqref{eq:CCF}, $\theta(t,\cdot)$, is even. Likewise, the corresponding field $u(t,\cdot)=H\theta(t,\cdot)$ is odd, as is the transport velocity $-u(t,\cdot)$. 
\end{remark}

The quantitative version of this result in Theorem~\ref{thm:main} gives explicit estimates for $\|u_x(t,\cdot)\|_{L^\infty}$, $\|\theta_x(t,\cdot)\|_{L^\infty}$, and the endpoint
H\"older seminorms. However, it does not identify the optimal H\"older exponent. In particular, the theorem neither proves nor disproves the $C^{1/2}$ conjecture \cite{SV}. In the next two subsections we introduce the main ideas behind the proof of Theorem \ref{thm:intro} and provide a sketch of the proof. 

\subsection{Sketch of the proof} The first key idea is to find an integral representation that is preserved by the nonlinear equation and makes its nonlocal terms explicit.
In this direction, we prove that the representation
\begin{equation}\label{eq:intro-mixture}
 \theta(t,x)=\int_{(0,\infty)}\frac{a^2}{a^2+x^2}\,\dd\mu_t(a),
 \qquad \mu_t\geq0,\qquad \mu_t((0,\infty))=1,
\end{equation}
with a Borel probability measure on the positive half-line, is preserved for as long as the smooth maximal solution exists, provided the initial datum
has the Sobolev regularity and moment bounds specified
in Proposition~\ref{prop:local}. At each time, the representing
probability measure is unique, and the nonlocal field $u=H\theta$ is
recovered from the same measure, namely
\begin{equation}\label{eq:intro-u-mixture}
 u(t,x)=H\theta(t,x)
 =\int_{(0,\infty)}\frac{ax}{a^2+x^2}\,\dd\mu_t(a).
\end{equation}
The measure records the distribution of spatial scales in the solution.
Positivity is explicit in the representation, and the moments control
derivative bounds and concentration.

The datum of Theorem~\ref{thm:intro} corresponds to $\mu_0=\delta_1$, but it is worth pointing out that the representation applies to all initial data satisfying the assumptions in Proposition \ref{prop:local}.

After the change of variable $r=x^2$, representation~\eqref{eq:intro-mixture}
is a normalized Stieltjes representation, whose characterization is
classical; see~\cite{Berg,KarpPrilepkina,SSV}. To prove preservation,
we first freeze the field $u$. In the variable $r=x^2$, the backward
characteristics extend holomorphically and preserve the half-plane sign
conditions that characterize Stieltjes functions. This gives the
representation for the linear transport step. A nonlinear iteration,
with control of the first and inverse first moments, then propagates it
throughout the maximal lifespan $[0,T)$.

Let us now discuss how the representation described above enters the proof. Roughly speaking, it gives a useful sign to the nonlocal term in the
equation for $u$, which is very hard to capture otherwise. This field satisfies the Burgers type equation
\[
 u_t-uu_x=-\mathcal N(u),\qquad
 \mathcal N(u):=H(uHu_x)+uu_x.
\]
For every smooth function with our probability representation, we prove the pointwise bound
\begin{equation}\label{eq:intro-quadratic}
 \mathcal N(u)(t,x)\geq\frac{u(t,x)^2}{4x}, \quad \textup{for all } (t,x) \in [0,T) \times (0,\infty).
\end{equation}
Let us stress that, for a general function, the nonlocal expression on the left has no
evident pointwise sign. However, with the representation above, it is an explicit quadratic integral against the positive measure
$\mu_t\otimes\mu_t$. The comparison in~\eqref{eq:intro-quadratic} reduces
to the elementary identity
\[
 2\bigl(a^2+b^2+2x^2\bigr)-(a+b)^2
 =(a-b)^2+4x^2\geq0. 
\]
See Lemma~\ref{lem:force} for the details. The nonlocal nonlinear estimate is thus reduced to a
comparison of two elementary kernels. Let us point out that the same strategy may be useful
for other equations that preserve a suitable positive representation.

Finally, let us explain how this lower bound and the moments of the measure give the regularity bounds we were looking for. The inverse first moment
\[
 m(t):=\int_{(0,\infty)}a^{-1}\,\dd\mu_t(a)
      =u_x(t,0)=\Lambda\theta(t,0)=\|u_x(t,\cdot)\|_{L^\infty},
\]
measures the strength of the compression at the origin. Evaluating
the differentiated equation for $u$ there yields the Riccati type ODE and bounds
\begin{align}\label{infinite}
 m'(t)=\frac12m(t)^2+I(t),\qquad
 I(t):=\iint_{(0,\infty)^2}\frac{\dd\mu_t(a)\,\dd\mu_t(b)}{(a+b)^2},
 \qquad 0<I(t)\leq\frac14m(t)^2.
\end{align}
These bounds determine the stated lifespan interval and give
$m(t)\asymp(T-t)^{-1}$.

To obtain the uniform H\"older bound, we use the transported deficit
$P(t,x):=1-\te(t,x)$. With the material derivative
$D:=\partial_t-u\partial_x$, we have $DP=0$, $Dx=-u$, and
$Du=-\mathcal N(u)$. Consequently,~\eqref{eq:intro-quadratic} gives
\[
 D\!\left(\frac{u(t,x)}{x^{1/4}P(t,x)^{3/8}}\right)\leq0, \quad
 \quad \textup{for all } (t,x) \in [0,T) \times (0,\infty).
\]
Moreover, the initial quotient is at most one. Combining the resulting bound
$u(t,x)\leq x^{1/4}P(t,x)^{3/8}$ with the kernel inequality $P_x(t,x)\leq u(t,x)/x$ yields $P(t,x)\lesssim x^{2/5}$ and $u(t,x)\lesssim x^{2/5}$. The corresponding derivative estimates give the desired global $C^{2/5}$ control.

For the opposite bound, we note that the inverse second moment is encoded by the
curvature $K(t):=-\theta_{xx}(t,0)$, which satisfies the ODE $K'(t)=2m(t)K(t)$. Interpolation between $m$ and $K$ gives lower bounds for the H\"older seminorms. Keeping only the Riccati type inequalities would prove
divergence for $\beta>2/3$. To include $\beta=2/3$, we retain the
positive remainder $I(t)$. Then, a layer-cake argument gives a logarithmic lower bound for it. This improves the comparison between $K$ and $m$ and yields divergence also at the endpoint.

\subsection{Related results and further questions}

Now that we have an idea of how the proof of Theorem \ref{thm:intro} goes, let us compare this result with the available literature and propose some possible extensions. 

Differentiating~\eqref{eq:CCF} and setting $\omega=\theta_x$
gives
\[
 \omega_t-u\omega_x=u_x\omega,\qquad u_x=H\omega.
\]
Thus, the differentiated equation is the $a=-1$ member of
the generalized Constantin--Lax--Majda family
\[
 \omega_t+a u\omega_x=u_x\omega,\qquad u_x=H\omega,
\]
also known as the Okamoto--Sakajo--Wunsch model; see~\cite{CLM,OSW}. The first author and C\'ordoba~\cite{CC2010} extended the previously known finite-time blow-up mechanism for \eqref{eq:CCF} to all $a<0$ by using a different proof of the differential inequality $m'\geq \frac{1}{2}m^2$, obtained from \eqref{infinite}. We do not assert that the probability measure representation is preserved for other members of this family. 

So far, the most common approach in the literature to study singularities for these problems has been the use of self-similar profiles. 

In this direction, Elgindi and Jeong~\cite{EJ2020}
constructed smooth self-similar blow-up profiles for
sufficiently small $|a|$, and H\"older profiles for arbitrary
$a$, provided the H\"older exponent is sufficiently small.
Elgindi, Ghoul and Masmoudi~\cite{EGM2021} proved stability
of these blow-up regimes. For \eqref{eq:CCF}, these constructions
start from nonsmooth vorticities, corresponding to initial
data with just $C_{\rm loc}^{1,\alpha}$ regularity. They also show
that H\"older regularity of the associated velocity can
persist up to the singular time.  

Huang, Qin, Wang and Wei~\cite{HQWW} constructed exact
self-similar solutions with interiorly smooth profiles
for all $a\leq1$. Their $a=-1$ profiles are smooth on
$\R$ and satisfy
$|\Omega(x)|\sim C|x|^{-1/c_l}$, with $1<c_l<3$.
In particular, their primitives are unbounded at infinity. This
existence theorem for the differentiated equation therefore does not provide the H\"older estimates considered here for a smooth, bounded initial datum.

Numerical and asymptotic work~\cite{EF2020,LSS,WLG2023,
Discovery2025,WLLB2025} suggests that different H\"older
thresholds may occur. In particular, the unstable \eqref{eq:CCF} self-similar profiles predict scalar cusp exponents below $1/2$.
Realizing these singularities from smooth, bounded,
decaying data requires an additional argument controlling
the evolution. Huang, Tong and Wang~\cite{HTW2026} also
investigated singular profiles and two-scale dynamics.
Their convergence theorem concerns $a=0$, while the
corresponding \eqref{eq:CCF} dynamics are studied numerically.

A natural next question is whether the gap between
$2/5$ and $2/3$ in Theorem~\ref{thm:intro} can be narrowed.
For the related Hou--Luo model, Chen, Hou and
Huang~\cite{CHH2022} established a H\"older threshold
through a computer-assisted stability argument. Here, the probability representation suggests a route to
computer-assisted proofs with sharper H\"older bounds. A concrete goal is to find $\gamma\in(0,1)$ and $0 < \varepsilon \ll 1$ such that
\[
\begin{aligned}
 &\sup_{0\leq t<T}\|\theta(t,\cdot)\|_{C^\alpha}<\infty
 &&\text{for }0<\alpha\leq\gamma-\varepsilon,\\
 &\lim_{t\uparrow T}[\theta(t,\cdot)]_{C^\beta}=+\infty
 &&\text{for }\gamma+\varepsilon\leq\beta<1.
\end{aligned}
\]
For the initial datum in Theorem~\ref{thm:intro}, we expect that such a
certification would give $\gamma-\varepsilon>1/2$, although this is not
established here. For other smooth initial data, we expect H\"older
thresholds below $1/2$, as suggested by the unstable profiles discussed
above. Certifying either assertion requires control of the dynamics
approaching $T$, not merely an accurate numerical profile or estimates
on a fixed compact subinterval of $[0,T)$.

A separate possible extension concerns compactly supported data.
For an even cutoff $\chi\in C_c^\infty(\R)$ with $0\leq\chi\leq1$,
$\chi=1$ on $[-1,1]$, $\chi=0$ outside $[-2,2]$, and $\chi'\leq0$
on $(0,\infty)$, set
\[
 \theta_{0,R}(x):=\frac{\chi(x/R)}{1+x^2}.
\]
We expect that suitable tail estimates could extend the conclusions of
Theorem~\ref{thm:intro} to these data for sufficiently large $R$, with
$T$ replaced by a lifespan $T_R\in(0,2)$. This would require an additional
argument: any function with representation~\eqref{eq:intro-mixture} is strictly
positive on $\R$, so the truncated datum does not admit this representation. No such
extension is proved in this paper.

\subsection{Organization of the paper}
Section~\ref{sec:preliminaries} develops the properties of the probability
measure representation and proves the representation lemma for frozen
transport. Section~\ref{sec:invariance} establishes the nonlinear
construction and preservation of the representation.
Section~\ref{sec:forcing} proves the forcing identity and the quadratic
estimate. Section~\ref{sec:main-proof} states and proves the quantitative
version of Theorem~\ref{thm:intro}. The transport estimate used in the
construction is recalled in Appendix~\ref{app:energy}.

\subsection{Declaration on the use of AI tools}
AI tools, specifically ChatGPT, were used during the preparation of this
work to assist with mathematical exploration, intermediate
calculations, bibliographic searches, and the drafting and revision
of the manuscript. Their outputs were treated as proposals to be
checked, not as evidence of correctness. The authors checked the
arguments and references and take full responsibility for the results
and their presentation. The proofs in this paper 
do not use computer assisted estimates.

\subsection{Notation}\label{subsec:conventions}
The Hilbert transform and the half-Laplacian are
\begin{equation}\label{eq:H}
 Hf(x):=\frac1\pi\pv\int_{\R}\frac{f(y)}{x-y}\dd y \quad \textup{and} \quad \Lambda f(x) := (-\pd_x^2)^{\frac12}f(x) = \frac{1}{\pi} \pv \int_{\RR} \frac{f(x)-f(y)}{(x-y)^2} \dd y\,.
\end{equation}
Thus, using the convention $\widehat f(\xi)=\int_\R e^{-ix\xi}f(x)\,dx$, their multipliers are
$-i\operatorname{sgn}\xi$ and $|\xi|$, respectively. Moreover, with this convention
$$
H^2 = -I \quad \textup{in } L^2, \qquad H \pd_x = \pd_x H = \La.
$$
Throughout the paper, we use the nonlocal field $u$ and the material derivative $D$, with
\begin{equation}\label{eq:global-signs}
 u=H\theta,\qquad Hu=-\theta,\qquad
 D=\partial_t-u\partial_x.
\end{equation}
Thus the characteristic velocity in \eqref{eq:CCF} is $-u$.
We use the notation $f_x := \pd_x f$ and $f_t := \pd_t f$. Unless a domain is specified, all norms are spatial norms on $\R$. For $0<\alpha<1$, we write
\[
 [f]_{C^\alpha}:=\sup_{x\ne y}\frac{|f(x)-f(y)|}{|x-y|^\alpha},
 \qquad \norm f_{C^\alpha}:=\norm f_{L^\infty}+[f]_{C^\alpha}.
\]

\section{Probability measure representation and a key transport lemma}\label{sec:preliminaries}

For $a>0$, we set and fix
\begin{equation}\label{eq:kernels}
 p_a(x):=\frac{a^2}{a^2+x^2} \quad \textup{and} \quad 
 h_a(x):=\frac{ax}{a^2+x^2}.
\end{equation}
Thus, it follows that
\begin{equation}\label{eq:Hkernels}
 Hp_a=h_a,\qquad
 \norm{p_a}_{L^2}=\norm{h_a}_{L^2}=\Big(\frac{\pi a}{2}\Big)^{1/2}.
\end{equation}
We consider functions with the integral representation
\begin{equation} \label{def:mixture}
 f_\mu(x)=\mi p_a(x)\dd\mu(a),
\end{equation}
where $\mu$ is a Borel probability measure on $(0,\infty)$. The \textit{first} and
\textit{inverse first moments} of the measure are denoted by
\[
 A_\mu=\mi a\dd\mu(a),\qquad m_\mu=\mi a^{-1}\dd\mu(a).
\]

Having this notation at hand, we start with a preliminary technical lemma whose proof is elementary.   
 
\begin{lemma}\label{lem:mixture}
Let $f = f_\mu$ be as in \eqref{def:mixture}. Then
\begin{align*}
 A_\mu=\frac2\pi\int_0^\infty f(x)\dd x
\quad \textup{and} \quad 
 m_\mu=\frac2\pi\int_0^\infty\frac{1-f(x)}{x^2}\dd x.
\end{align*}
These identities hold even when the integrals are infinite. If $A_\mu<\infty$,
then $f\in L^2$ and
\begin{equation} \label{eq:umixture}
 Hf(x)=\mi h_a(x)\dd\mu(a).
\end{equation}
If also $m_\mu<\infty$, then $f$ and $Hf$ are continuously differentiable and,
with $b(r):=\mi a/(a^2+r)\dd\mu(a)$,
\begin{align*}
 |(Hf)'(x)|&\le b(x^2)\le m_\mu,
 &(Hf)'(0)&=m_\mu,
\\
 \norm{f'}_{L^\infty}&\le m_\mu,
 &\norm{Hf}_{L^\infty}&\le\tfrac12.
\end{align*}
\end{lemma}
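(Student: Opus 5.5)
The plan is to reduce every claim to an identity for the single kernels $p_a,h_a$ and then integrate against $\mu$. Tonelli and Fubini do the exchange of integrals, so positivity of all integrands carries the "even when infinite" cases.

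\textbf{Moment identities.} Since $p_a\ge0$, Tonelli gives
\[
\int_0^\infty f\,\dd x=\mi\int_0^\infty p_a\,\dd x\,\dd\mu(a)=\mi\frac{\pi a}{2}\,\dd\mu(a).
\]
Similarly, $1-f(x)=\mi \frac{x^2}{a^2+x^2}\dd\mu(a)$ because $\mu$ is a probability measure, so
\[
\int_0^\infty\frac{1-f(x)}{x^2}\dd x=\mi\int_0^\infty\frac{\dd x}{a^2+x^2}\,\dd\mu(a)=\mi\frac{\pi}{2a}\,\dd\mu(a).
\]
Both computations hold in $[0,\infty]$.

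\textbf{$L^2$ bound and the Hilbert transform.} If $A_\mu<\infty$, Minkowski's integral inequality and \eqref{eq:Hkernels} give
\[
\|f\|_{L^2}\le\mi(\pi a/2)^{1/2}\,\dd\mu\le (\pi A_\mu/2)^{1/2}<\infty,
\]
using Jensen or Cauchy--Schwarz for the last step. The same bound holds for $g:=\mi h_a\,\dd\mu$. To identify $Hf=g$, I would avoid the principal value. The map $a\mapsto p_a$ is Bochner integrable into $L^2$ with $\int\|p_a\|_{L^2}\,\dd\mu<\infty$, so the $L^2$-valued integral $\int p_a\,\dd\mu(a)$ converges and agrees a.e.\ with the pointwise mixture $f$. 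Since $H$ is bounded on $L^2$, it commutes with Bochner integrals, and $Hp_a=h_a$ by \eqref{eq:Hkernels}. Hence $Hf=g$ a.e., and everywhere once continuity is known. An alternative is to test against Schwartz functions $\varphi$ via $\int (Hf)\varphi=-\int f H\varphi$ together with Fubini. The Bochner measurability and the "a.e.\ versus everywhere" bookkeeping is the only slightly delicate point; continuity of $g$ follows from dominated convergence because $|h_a|\le1/2$.

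\textbf{Derivative bounds.} Assume also $m_\mu<\infty$. I would differentiate under the integral using the kernel bounds
\[
|p_a'(x)|=\frac{2a^2|x|}{(a^2+x^2)^2}\le\frac{1}{a}\sup_{s}\frac{2|s|}{(1+s^2)^2}\le\frac{1}{a},
\qquad
h_a'(x)=\frac{a(a^2-x^2)}{(a^2+x^2)^2},\quad |h_a'(x)|\le\frac{a}{a^2+x^2}\le\frac1a.
\]
The first bound uses the scaling $s=x/a$ and the fact that the maximum of $2s/(1+s^2)^2$ is $3\sqrt3/8<1$. These bounds are uniform in $x$ and integrable against $\mu$, so dominated convergence gives $C^1$ regularity with continuous derivatives. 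It also gives
\[
|(Hf)'(x)|\le\mi\frac{a}{a^2+x^2}\,\dd\mu=b(x^2)\le m_\mu,\qquad (Hf)'(0)=\mi a^{-1}\,\dd\mu=m_\mu,
\]
and $\|f'\|_{L^\infty}\le m_\mu$. Finally, $|h_a(x)|\le\tfrac12$ by AM--GM ($2a|x|\le a^2+x^2$), which yields $\|Hf\|_{L^\infty}\le\tfrac12$; this last bound in fact needs only $A_\mu<\infty$.
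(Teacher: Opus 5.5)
Your proposal is correct and follows essentially the paper's route: Tonelli for the two moment identities, the bound $\int\sqrt{a}\,\dd\mu(a)\le\sqrt{A_\mu}$ to place $f$ in $L^2$ and interchange $H$ with the $\mu$-integral, and the same kernel bounds $|p_a'|,|h_a'|\le a^{-1}$ to differentiate under the integral sign. The only difference is cosmetic: you state the interchange as $H$ commuting with a Bochner integral, while the paper uses the duality identity $\langle Hf,\varphi\rangle=-\langle f,H\varphi\rangle$ with Fubini, which is the alternative you mention yourself.
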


\begin{proof}
The proof of the first two identities is direct calculus. In order to check the third, we use a duality argument. Before doing so, we set  $g(x) := \int_{(0,\infty)} h_a(x) \dd\mu(a)$ and point out that $f$ and $g$ are both continuous functions. Moreover, note that $A_\mu < \infty$ implies $\int_{(0,\infty)} \sqrt{a} \dd\mu(a) \leq \sqrt{A_\mu}$. This allows the subsequent use of Fubini's theorem. Let $\vp \in L^2$ be fixed but arbitrary. Using that $H^{*} = - H$ on $L^2$ and Fubini, we get that
\begin{align*}
 \langle Hf,\varphi\rangle_{L^2}
 &=-\langle f,H\varphi\rangle_{L^2}
   =-\int_{(0,\infty)}\langle p_a,H\varphi\rangle_{L^2}\dd\mu(a)\\
 &=\int_{(0,\infty)}\langle Hp_a,\varphi\rangle_{L^2}\dd\mu(a)
   =\int_{(0,\infty)}\langle h_a,\varphi\rangle_{L^2}\dd\mu(a)
   =\langle g,\varphi\rangle_{L^2}.
\end{align*}
Therefore $Hf=g$ in $L^2$. Moreover, this holds pointwise when $Hf$ is represented by the continuous function $g$. In particular, it follows that
$\norm{Hf}_{L^\infty}=\norm{g}_{L^\infty} \le 1/2$.

Assume in addition that $m_\mu < \infty$. 
Direct differentiation yields
\[
 h_a'(x)=\frac{a(a^2-x^2)}{(a^2+x^2)^2},\qquad
 p_a'(x)=-\frac{2a^2x}{(a^2+x^2)^2}.
\]
Moreover, $|h_a'(x)|\le a/(a^2+x^2)\le1/a$, $h_a'(0)=1/a$, and $|p_a'(x)|\le1/a$. Since $m_\mu<\infty$, these bounds justify differentiation under the integral sign and prove the continuous differentiability and all the claimed derivative bounds.
\end{proof}

\begin{remark}\label{rem:moment}
Assume $A_\mu<\infty$. Even without initially assuming $m_\mu<\infty$, \eqref{eq:umixture} gives
\[
 \lim_{x\downarrow0}\frac{Hf(x)}x
 =\lim_{x\downarrow0}\mi\frac{a}{a^2+x^2}\dd\mu(a)
 =\mi a^{-1}\dd\mu(a)
\]
by monotone convergence. If $Hf$ is differentiable at zero, this identifies
the moment with $(Hf)'(0)$ and proves its finiteness. This observation
avoids assuming an interchange of differentiation and integration in order
to establish that the inverse moment exists.
\end{remark}

The rest of this section is devoted to proving a key ODE transport lemma that will be crucial in the proof of our main result. However, before stating and proving such a result, we record a classical representation of \textit{Stieltjes} functions. Setting $\D:=\C\setminus(-\infty,0]$ and $\HH:=\{z:\operatorname{Im}z>0\}$, we prove the following.

\begin{lemma}\label{lem:stieltjes}
Let $q$ be holomorphic on $\D$, nonnegative on $(0,\infty)$, and satisfy
$\operatorname{Im}q(z)\le0$ for $z\in\HH$. Then
\begin{equation*} 
 q(z)=c+\int_{[0,\infty)}\frac{\dd\rho(s)}{z+s},
 \qquad c\ge0,\quad
 \int_{[0,\infty)}\frac{\dd\rho(s)}{1+s}<\infty,
\end{equation*}
for a unique positive measure $\rho$ and constant $c$. If, in addition,
$\lim_{r \downarrow 0} q(r)=1$ and $\lim_{r \to \infty} q(r)=0$, then
\begin{equation*} 
 q(z)=\mi\frac{a^2}{a^2+z}\dd\nu(a)
\end{equation*}
for a unique probability measure $\nu$ on $(0,\infty)$.
\end{lemma}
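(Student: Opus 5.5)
The plan is to reduce the first statement to the Nevanlinna--Herglotz representation of holomorphic functions on the upper half-plane, and then to read off the normalized form from the two limits.

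\textbf{First statement.} I would set $F(z):=-q(z)$ on $\HH$. Then $F$ is holomorphic on $\HH$ with $\operatorname{Im}F\ge 0$, so the Herglotz representation gives
\[
F(z)=\alpha+\beta z+\int_{\R}\Bigl(\frac{1}{t-z}-\frac{t}{1+t^2}\Bigr)\dd\sigma(t),\qquad \beta\ge0,\quad \int_\R\frac{\dd\sigma(t)}{1+t^2}<\infty.
\]
Here $\sigma$ is the weak limit of $\frac1\pi\operatorname{Im}F(x+i\ep)\,\dd x$ as $\ep\downarrow0$ (Stieltjes inversion).

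\emph{Support of $\sigma$.} Since $q$ is holomorphic across $(0,\infty)$ and real there, $\operatorname{Im}F(x+i\ep)\to0$ locally uniformly on $(0,\infty)$. Hence $\sigma$ vanishes on $(0,\infty)$ and is supported in $(-\infty,0]$. Writing $t=-s$ with $s\ge0$ turns the kernel into $-1/(z+s)$ plus a real constant correction. Absorbing that correction into $\alpha$ needs a little care, since the correction integrals converge only in the compensated form. I would handle this by subtracting the value at $z=1$: $\frac{1}{z+s}-\frac{1}{1+s}=\frac{1-z}{(z+s)(1+s)}$ is integrable against $\dd\rho(s)/(1+s)$-finite measures.

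\emph{Removing $\beta$.} On $(0,\infty)$ the function $q=-F$ must stay nonnegative. The term $-\beta r$ tends to $-\infty$ as $r\to\infty$, while the integral part satisfies $\int\frac{\dd\rho(s)}{r+s}\to0$ by dominated convergence. So $\beta=0$.

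\emph{The constant.} We are left with $q(r)=c'+\int\frac{\dd\rho(s)}{r+s}$, with $\rho\ge0$ the reflection of $\sigma$. At this point $\int\frac{\dd\rho}{1+s}<\infty$ must be justified directly: Fatou's lemma applied to the compensated form at $z=r$ gives it from finiteness of $q(1)$. With that, the compensated integrals split, and $c=\lim_{r\to\infty}q(r)\ge0$. The identity extends from $\HH$ to all of $\D$ by the identity theorem, since both sides are holomorphic on $\D$.

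\emph{Uniqueness.} $c$ is the limit at infinity, and $\rho$ is recovered by Stieltjes inversion applied to the boundary values of $q$ on $(-\infty,0]$.

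\textbf{Second statement.} The limit $\lim_{r\to\infty}q(r)=0$ forces $c=0$.
\begin{itemize}
\item By monotone convergence, $\lim_{r\downarrow0}q(r)=\int_{[0,\infty)}\frac{\dd\rho(s)}{s}$, with an atom at $s=0$ contributing $+\infty$. Hence $\rho(\{0\})=0$ and $\int s^{-1}\dd\rho=1$.
\item Put $\nu:=$ the pushforward of $s^{-1}\dd\rho(s)$ under $s\mapsto a=\sqrt s$. This is a probability measure on $(0,\infty)$.
\item Then $\frac{\dd\rho(s)}{z+s}=\frac{s}{z+s}\,\frac{\dd\rho(s)}{s}=\frac{a^2}{a^2+z}\,\dd\nu(a)$, which is the claimed representation.
\end{itemize}
Uniqueness of $\nu$ follows from uniqueness of $\rho$.

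\textbf{Main obstacle.} The delicate step is the bookkeeping in passing from the Herglotz form, with its compensating term $t/(1+t^2)$, to the plain Stieltjes form. In particular, showing $\int\frac{\dd\rho}{1+s}<\infty$ rather than merely $\int\frac{\dd\rho}{1+s^2}<\infty$ is needed for the constants to be absorbed. I would get this from nonnegativity on $(0,\infty)$ together with a Fatou argument at $r=1$. Alternatively, since this is classical, one may simply cite \cite{Berg,SSV} for the first statement and only prove the normalization step in detail.
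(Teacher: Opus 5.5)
Your proposal is correct. For the normalized part (forcing $c=0$, excluding an atom at $s=0$ by monotone convergence, and pushing $s^{-1}\,\dd\rho(s)$ forward under $s\mapsto\sqrt{s}$) it coincides with the paper's argument. The difference is in the unnormalized representation. The paper does not prove it but cites it (\cite[Theorem 13]{KarpPrilepkina}, with uniqueness from \cite{SSV}). You instead derive it from the Herglotz--Nevanlinna representation of $-q$ on $\HH$ together with Stieltjes inversion. The citation is shorter. Your route is self-contained and shows where each hypothesis enters:
holomorphy and reality of $q$ across $(0,\infty)$ confine the Herglotz measure to $(-\infty,0]$;
nonnegativity of $q$ on $(0,\infty)$ kills the linear term, upgrades $\int\dd\rho/(1+s^2)<\infty$ to $\int\dd\rho/(1+s)<\infty$, and gives $c\ge0$.

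One step should be reordered. In ``Removing $\beta$'' you use dominated convergence for $\int\dd\rho(s)/(r+s)$. That presupposes $\int\dd\rho/(1+s)<\infty$, which you only establish afterwards. Both facts follow at once from the form compensated at $z=1$. First extend the Herglotz formula to $\D$; this is legitimate because the compensated integrand is $O((1+s^2)^{-1})$ locally uniformly there. Then for $r>1$ you have
\[
 q(1)-q(r)=\beta(r-1)+\int_{[0,\infty)}\frac{(r-1)\,\dd\rho(s)}{(r+s)(1+s)}.
\]
Both terms on the right are nonnegative, and the integrand increases in $r$ to $(1+s)^{-1}$. Since $q(r)\ge0$, the left side is at most $q(1)$. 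Hence $\beta=0$ and, by monotone convergence, $\int\dd\rho/(1+s)\le q(1)$. After that, your splitting of the compensated integral, the identification $c=\lim_{r\to\infty}q(r)\ge0$, and the remaining steps go through as you wrote them.
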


\begin{proof}
The unnormalized representation follows for instance from
\cite[Theorem 13]{KarpPrilepkina}. Uniqueness of the representing measure and constants is recalled in \cite[p. 16]{SSV}.
For the normalized representation, dominated convergence as $r\to\infty$ gives $c=0$. The finite limit as $r\downarrow0$ then implies $\rho(\{0\})=0$ and, by monotone convergence,
\[
 \int_{(0,\infty)}s^{-1}\dd\rho(s)=1.
\]
Define $\nu$ to be the pushforward of the probability measure $s^{-1}\dd\rho(s)$ under $s\mapsto\sqrt{s}$. Substituting $s=a^2$ in the representation of $q$ gives the desired formula. This change of measure is invertible, so uniqueness follows from that of $\rho$. See also \cite{Berg}.
\end{proof}

\begin{remark} To a function $f_\mu$ as in \eqref{def:mixture}, we associate the complex function 
$$
Q_{\mu}(z) := \int_{(0,\infty)} \frac{a^2}{a^2+z} \dd\mu(a), \quad z \in \cD.
$$
We call $Q_\mu$ the \textit{normalized Stieltjes function} associated with $f_\mu$. It is worth pointing out that $Q_\mu$ is nothing but the holomorphic extension to $\cD$ of $r \mapsto f_\mu(\sqrt{r})$. 
\end{remark}

\begin{lemma}
\label{lem:frozen}
Let $(\mu_t)_{0\le t\le T}$ be a weakly measurable family of
Borel probability measures on $(0,\infty)$, with moments
\[
 A(t):=\mi a\,\dd\mu_t(a),
 \qquad
 m(t):=\mi a^{-1}\,\dd\mu_t(a).
\]
Assume that $A,m\in L^1(0,T)$ and that
$A_0:=A(0)$ and $m_0:=m(0)$ are finite, and set
\[
 F(t,z):=2z\mi\frac{a}{a^2+z}\,\dd\mu_t(a),
 \qquad
 Q_0(z):=\mi\frac{a^2}{a^2+z}\,\dd\mu_0(a).
\]
Then, the transport problem
\begin{equation}\label{eq:frozenPDE}
 Q_t=F(t,r)Q_r,
 \qquad Q(0,r)=Q_0(r),
 \qquad r>0,
\end{equation}
has a unique characteristic solution on $[0,T]$ and the equation holds for almost every $t$.
For every $t\in[0,T]$, this solution extends holomorphically
to $\D$ and admits the representation
\begin{equation}\label{eq:composition}
 Q(t,z)=\mi\frac{a^2}{a^2+z}\,\dd\nu_t(a),
 \qquad z\in\D,
\end{equation}
where $\nu_t$ is a unique Borel probability measure on
$(0,\infty)$, with $\nu_0=\mu_0$.
Writing $L(t):=\int_0^t m(s)\,\dd s$, its moments satisfy
\begin{align}  \label{eq:momentAflow}
 e^{-L(t)}A_0
 &\le \mi a\,\dd\nu_t(a)\le A_0 \quad \textup{and} \quad
 m_0 
 \le \mi a^{-1}\,\dd\nu_t(a)\le e^{L(t)}m_0.
\end{align}
In particular, setting $f(t,x):=Q(t,x^2)$ for $x\ne0$
and $f(t,0):=1$, each $f(t,\cdot)$ has the representation in \eqref{def:mixture}, and
\begin{equation} \label{eq:ftcomparison}
 0\le f(t,x)\le f(0,x), \quad (t,x) \in [0,T] \times \RR\,.
\end{equation}
\end{lemma}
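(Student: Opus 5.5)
The plan is to solve \eqref{eq:frozenPDE} by characteristics, extend the characteristic flow holomorphically to $\D$, and then identify the solution as a composition $Q(t,z)=Q_0(\Phi_t(z))$ to which Lemma~\ref{lem:stieltjes} applies. Since $Q_t=F Q_r$, $Q$ is constant along curves with $\dot R=-F(t,R)$. So for fixed $t$ we let $\Phi_t(z)=Z(0)$, where $Z(s)$, $s\in[0,t]$, solves
\[
\dot Z(s)=-F(s,Z(s)),\qquad Z(t)=z .
\]
This is the backward characteristic, run from time $t$ down to $0$ with $\frac{d}{ds}$ in forward time. We then set $Q(t,z)=Q_0(\Phi_t(z))$.

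\textbf{Step 1: well-posedness of the flow.} For $r>0$, write $F(s,r)=2r\,b_s(r)$ with $0\le b_s(r)\le m(s)$. Then $F$ is Carathéodory, with Lipschitz constant in $r$ bounded by a multiple of $m(s)$; this uses $\partial_r F=2\int a^3/(a^2+r)^2\,d\mu_s\le 2m(s)$. Since $m\in L^1$, Carathéodory theory gives unique absolutely continuous characteristics. Along them $|\dot R|\le 2m R$, so $R$ stays in $(0,\infty)$ with $e^{-2L}r\le R\le e^{2L}r$ up to the appropriate normalisation. Moreover $F\ge0$ forces $\Phi_t(r)\ge r$. Uniqueness of the characteristic solution then follows, and the PDE holds a.e.\ in $t$ by differentiating along the flow.

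\textbf{Step 2: holomorphic extension and sign conditions.} For $z\in\D$, the map $w\mapsto F(s,w)$ is holomorphic on $\D$. The crucial sign is
\[
\operatorname{Im}\frac{w}{a^2+w}=\frac{a^2\operatorname{Im}w}{|a^2+w|^2},
\]
which shows that $F(s,\cdot)$ maps $\HH$ into $\overline{\HH}$. Hence, for the backward problem, $\frac{d}{ds}\operatorname{Im}Z=-\operatorname{Im}F\le0$ in $\HH$. Running backward from $s=t$ to $0$, $\operatorname{Im}Z$ can only grow, so $\HH$ is preserved and $\Phi_t(\HH)\subset\HH$. The lower half-plane is handled by conjugation, and the positive axis is preserved by Step~1, so $\Phi_t$ maps $\D$ into $\D$.

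The main obstacle is showing that $Z$ neither reaches $(-\infty,0]$ nor escapes to infinity on $[0,t]$, and that $\Phi_t$ is holomorphic in $z$. I would try to bound $|F(s,w)|\le C\,m(s)|w|$ on sectors, using $|a^2+w|\ge c\,(a^2+|w|)$ when $|\arg w|\le\pi-\delta$. Combined with the monotonicity of $\operatorname{Im}$, this gives Gronwall bounds that keep $Z$ in a compact subset of $\D$ for $z$ in compacta. Holomorphy in $z$ then follows from differentiable dependence on initial data, or from uniform limits of Picard iterates, each of which is holomorphic.

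\textbf{Step 3: representation and moments.} Given $\Phi_t(\HH)\subset\HH$ and $\operatorname{Im}Q_0\le0$ on $\HH$ (as for any Stieltjes function), we get $\operatorname{Im}Q(t,z)\le 0$ on $\HH$, while $Q(t,\cdot)\ge0$ on $(0,\infty)$. For the limits, $\Phi_t(r)\to0$ as $r\downarrow0$ by the bound $\Phi_t(r)\le e^{2L}r$, so $Q(t,r)\to1$. Also $\Phi_t(r)\ge r\to\infty$ gives $Q(t,r)\to0$. Lemma~\ref{lem:stieltjes} then yields a unique $\nu_t$, with $\nu_0=\mu_0$ because $\Phi_0=\mathrm{id}$. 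The comparison \eqref{eq:ftcomparison} follows from $\Phi_t(r)\ge r$ and the fact that $Q_0$ is decreasing.

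For the moments we use Lemma~\ref{lem:mixture}: $A=\frac2\pi\int_0^\infty f\,dx$ and $m=\frac2\pi\int_0^\infty(1-f)x^{-2}dx$, with $f(t,x)=f_0(\sqrt{\Phi_t(x^2)})$. The pointwise bounds
\[
x^2\le\Phi_t(x^2)\le e^{2L(t)}x^2,\quad\text{i.e.}\quad x\le\sqrt{\Phi_t(x^2)}\le e^{L(t)}x,
\]
give, after the change of variables $y=\sqrt{\Phi_t(x^2)}$ compared with $y=x$ and $y=e^{L}x$, the inequalities \eqref{eq:momentAflow}. The lower bound uses $f(t,x)\ge f_0(e^{L}x)$; for the inverse moment one has $1-f(t,x)\le 1-f_0(e^Lx)$, whose integral against $x^{-2}$ scales by $e^{L}$. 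The upper bounds use $\Phi_t(r)\ge r$.
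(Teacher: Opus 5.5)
Your overall route is the paper's: backward characteristics, $\operatorname{Im}F\ge0$ on $\HH$ plus conjugation, and $r\le\Phi_t(r)\le e^{2L(t)}r$ on the positive axis. The rest also matches: Lemma~\ref{lem:stieltjes} applied to $Q_0\circ\Phi_t$, and the comparison $f(0,e^{L(t)}x)\le f(t,x)\le f(0,x)$ integrated against $1$ and $x^{-2}$. Steps~1 and~3 are fine.

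\textbf{The gap.} It lies in the step you flag as the main obstacle, namely confining the complex backward trajectory to a compact subset of $\D$. Your sector estimate $|a^2+w|\ge\sin(\delta/2)(a^2+|w|)$ for $|\arg w|\le\pi-\delta$, and hence $|F(s,w)|\le 2m(s)|w|/\sin(\delta/2)$, is correct. However, monotonicity of $\operatorname{Im}Z$ does not keep $Z$ in any fixed sector. It only gives $\operatorname{Im}Z\ge\eta:=\operatorname{Im}z$, and $\arg Z$ could approach $\pi$ if $\operatorname{Re}Z\to-\infty$. If you let the sector depend on $|Z|$, then $\delta\gtrsim\eta/|Z|$ and you only get $|F(s,Z)|\lesssim m(s)|Z|^2/\eta$. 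This Riccati-type bound controls $|Z|$ only while $L(t)\lesssim\eta/|z|$. So global existence and holomorphy of $\Phi_t$ on $\D$ are not established as written. Tellingly, you never use the hypothesis $A\in L^1(0,T)$, which is exactly what the paper spends at this point.

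\textbf{The paper's fix.} In $\HH$, $|a^2+Z|\ge\operatorname{Im}Z\ge\eta$ gives $|F(s,Z)|\le 2A(s)|Z|/\eta$, and Gronwall with $A\in L^1$ bounds $|Z|$. Since this degenerates as $\eta\downarrow0$, the right half-plane is handled separately. There $\operatorname{Re}Z$ is nondecreasing along the backward flow, $|a^2+Z|\ge a^2$, and $|F|\le 2m|Z|$.

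\textbf{A fix closer to your idea.} The argument of $Z$ is monotone. In your time convention
\[
\frac{\dd}{\dd s}\arg Z=\operatorname{Im}\frac{\dot Z}{Z}=2\operatorname{Im}Z\mi\frac{a}{|a^2+Z|^2}\,\dd\mu_s(a),
\]
which has the sign of $\operatorname{Im}Z$. So, running backward from $s=t$, $|\arg Z|$ does not increase, and each sector $\{|\arg w|\le\pi-\delta\}$ is invariant. Your bound then gives $e^{-2L(t)/c}|z|\le|Z(s)|\le e^{2L(t)/c}|z|$ with $c=\sin(\delta/2)$. This holds uniformly for $z$ in compact subsets of $\D$, including neighbourhoods of the positive axis, and uses only $m\in L^1$. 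So with this route the integrability of $A$ is not needed for the flow at all.

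\textbf{A smaller point.} The claim that ``the PDE holds a.e.\ by differentiating along the flow'' needs differentiability of $t\mapsto\Phi_t(r)$. This is not immediate from its definition as the endpoint of a trajectory started at time $t$. The paper obtains $\partial_t\Phi_t=F\,\partial_r\Phi_t$ from the forward flow $X$, its Jacobian $J\in[e^{-2L(t)},1]$, and the identity $X(t,\Phi_t(r))=r$.
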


\begin{proof}
First of all, we set
\begin{equation}\label{eq:BF}
 B(t,z):=\mi\frac{a}{a^2+z}\,\dd\mu_t(a),
 \qquad F(t,z)=2zB(t,z).
\end{equation}
Then, note that for each compact $K \subset \cD$, there exists a constant $c_K > 0$ such that
$$
|a^2+z| \geq c_K (1+a^2)\,, \quad a \in \RR, \ z \in K\,.
$$
Combining this inequality with the probability normalization, one can easily show that $F(\cdot,z)$ is measurable for every $z \in K$, $F(t,\cdot)$ is holomorphic on $\cD$ for every $0 \leq t \leq T$, and $F$ is jointly measurable in $(t,z)$. Moreover, it follows that
\begin{equation}\label{eq:frozen-local-bounds}
 \sup_{s\in[0,T]}\sup_{z\in K}
 \bigl(|F(s,z)|+|F_z(s,z)|\bigr)\le C_K.
\end{equation}

Next, we consider the backwards characteristic equation
\begin{equation} \label{eq:backwards}
Z'(\sigma) = F(t-\sigma, Z(\sigma)),\quad  Z(0) = z, \quad 0 \leq \sigma \leq t \leq T.
\end{equation}
For real $z>0$, this describes the characteristic of
\eqref{eq:frozenPDE} followed backwards from position $z$
at time $t$. We shall first prove that it is well defined for every $z\in\D$, and then set
\[
 \Phi_t(z):=Z(t),\qquad Q(t,z):=Q_0(\Phi_t(z)).
\]

Having \eqref{eq:frozen-local-bounds} at hand, we shall first prove the local existence, uniqueness and holomorphic dependence of a solution to \eqref{eq:backwards}. The integral formulation of \eqref{eq:backwards} is
\[
 Z(\sigma)
 =z+\int_0^\sigma F(t-\tau,Z(\tau))\,\dd\tau.
\]
On a sufficiently short interval, the right-hand side defines
a contraction on continuous curves taking values in a small
closed disk centered at $z$, by \eqref{eq:frozen-local-bounds}.
Its fixed point is the unique local absolutely continuous
characteristic. Moreover, the Picard iterates are holomorphic in the
initial point, and converge locally uniformly. Hence, the
characteristic depends holomorphically on that point. The same local construction gives a continuation criterion: a solution extends past a finite endpoint if its image is contained in a compact subset of $\cD$. 

Once we have this local existence, and the continuation criterion, we are going to prove global continuation. We fix $t \in [0,T]$. First observe that, for $z\in\HH$, direct calculation gives
\begin{equation}\label{eq:frozen-imaginary}
 \operatorname{Im}F(s,z)
 =2\operatorname{Im}z\mi\frac{a^3}{|a^2+z|^2}\,\dd\mu_s(a)\ge0.
\end{equation}
Hence, if $\eta=\operatorname{Im}z>0$, then
$\operatorname{Im}Z(\sigma)\ge\eta$ and
\[
 |F(s,Z)|\le\frac{2A(s)}{\eta}|Z|.
 \]
Moreover, combining these estimates with Gronwall's inequality, we get that
\[
 |Z(\sigma)|\le |z|\exp\left(\frac2\eta\int_0^t A(s)\,\dd s\right).
\]
Thus, the trajectory stays in a bounded closed subset of
$\{\operatorname{Im}z\ge\eta\}\subset\D$. The continuation
criterion previously described proves existence up to $\sigma=t$ and preservation of
the upper half-plane. Since
$F(s,\overline z)=\overline{F(s,z)}$, conjugating a trajectory and
using uniqueness proves the corresponding assertion in the lower
half-plane. In order to prove the continuation on the whole of $\cD$ we are just missing to show the corresponding result in the right half-plane $\{z: {\rm Re }\, z > 0\}$. 

For a starting point with $\operatorname{Re}z=x_0>0$, one can easily check that the real part of the trajectory is nondecreasing and therefore remains at least $x_0$.
Also $|a^2+Z|\ge a^2$ in this half-plane. Hence,
\[
 |F(s,Z)|\le2m(s)|Z|,
 \qquad
 |Z(\sigma)|\le|z|\exp\left(2\int_0^t m(s)\dd s\right).
\]
This puts the trajectory in another compact subset of $\D$ and
again proves existence up to $\sigma = t$. The upper, lower and right
half-planes cover $\D$, so every starting point in $\D$ is covered.
On their overlaps the solutions agree by uniqueness. We have now proved the desired continuation. 

Moreover, observe that, for real $z = r>0$, conjugation and uniqueness imply that $Z(\sigma)$
is real. The right-half-plane result makes it positive, and
\[
 0\le\frac{Z'(\sigma)}{Z(\sigma)}
 =2\int_{(0,\infty)}\frac a{a^2+Z(\sigma)}\dd\mu_{t-\sigma}(a)
 \le2m(t-\sigma)
\]
almost everywhere. Integration prevents both reaching
zero and escape to infinity. Thus, $\Phi_t(z)=Z(t)$ is holomorphic
on $\D$, preserves $\HH$ and $(0,\infty)$, and satisfies
\begin{equation}\label{eq:realflow}
 r\le\Phi_t(r)\le e^{2L(t)}r, \quad \textup{for all } r > 0.
\end{equation}

Next, for $r_0 > 0$, we consider the forward real characteristic equation
$$
X_t(t,r_0) = - F(t,X(t,r_0)), \quad X(0,r_0) = r_0.
$$
The forward real flow $X(\cdot,r_0)$ likewise
exists throughout $[0,T]$, since
\begin{equation}\label{eq:frozen-forward-bounds}
 r_0e^{-2L(t)}\le X(t,r_0)\le r_0.
\end{equation}
Moreover,
\[
 F_r(s,r)=2\mi\frac{a^3}{(a^2+r)^2}\,\dd\mu_s(a),
 \qquad 0\le F_r(s,r)\le2m(s),
\]
so its Jacobian satisfies
\begin{equation}\label{eq:frozen-flow-jacobian}
 J(t,r_0):=\partial_{r_0}X(t,r_0)
 =\exp\left(-\int_0^t F_r(s,X(s,r_0))\,\dd s\right)
 \in[e^{-2L(t)},1].
\end{equation}
Consequently $X(t,\cdot)$ is an increasing bijection of
$(0,\infty)$ onto itself. Reversing time in \eqref{eq:backwards}
shows that
\begin{equation}\label{eq:frozen-inverse-flow}
 X(t,\Phi_t(r))=r, \quad \textup{for all } r > 0.
\end{equation}
The local bounds on $F,F_r$ and the lower bound on $J$ make the
flow and its inverse locally Lipschitz in time. The absolutely
continuous chain rule applied to this identity gives
\[
 \partial_r\Phi_t(r)=\frac1{J(t,\Phi_t(r))},
 \qquad
 \partial_t\Phi_t(r)=F(t,r)\partial_r\Phi_t(r)
\]
for almost every $t$ in the second identity. Hence,
$Q(t,r):=Q_0(\Phi_t(r))$ solves \eqref{eq:frozenPDE} on $[0,T]$. Note that this last identity can also be written as $Q(t, X(t,r)) = Q_0(r)$. Since $X(t,\cdot)$ is a bijection of $(0,\infty)$ onto itself,
this determines $Q$ uniquely, showing that \eqref{eq:frozenPDE} has indeed a unique characteristic solution on $[0,T]$.

We now show that the constructed characteristic solution admits the \textit{Stieltjes} representation given in \eqref{eq:composition}. The same compact kernel bound \eqref{eq:frozen-local-bounds} shows that $Q_0$ is holomorphic
on $\D$. Its integral formula and dominated convergence give
\[
 Q_0(r)>0,\qquad Q_0'(r)<0 \textup{ for } r > 0,\qquad
 \operatorname{Im}Q_0(z)\le0 \textup{ for } z\in\HH,
 \qquad \lim_{r \downarrow 0} Q_0(r)=1,\quad \lim_{r \to \infty} Q_0(r)=0.
\]
Since $\Phi_t$ preserves $\HH$ and the positive axis,
$Q_0\circ\Phi_t$ has the same sign properties, and
\eqref{eq:realflow} gives the same endpoint limits.
Lemma~\ref{lem:stieltjes} therefore yields the existence of the unique probability
measure $\nu_t$ in \eqref{eq:composition}. At $t=0$, uniqueness
gives $\nu_0=\mu_0$. We have therefore the desired representation. 

Finally, set $\kappa=e^{L(t)}$. Monotonicity of $Q_0$ and
\eqref{eq:realflow} give
\begin{equation} \label{eq:comparisonft}
 f(0,\kappa x)\le f(t,x)\le f(0,x), \quad \textup{ for } x \in \RR.
\end{equation}
By the two moment identities in Lemma \ref{lem:mixture}, initially
allowing infinite values,
\[
 \mi a\,\dd\nu_t(a)=\frac2\pi\int_0^\infty f(t,x)\,\dd x,
 \qquad
 \mi a^{-1}\,\dd\nu_t(a)
 =\frac2\pi\int_0^\infty\frac{1-f(t,x)}{x^2}\,\dd x.
\]
Integrating the comparison for $f(t,\cdot)$, namely \eqref{eq:comparisonft}, and then the reversed comparison for $1-f(t,\cdot)$ against $1/x^2$, proves
\eqref{eq:momentAflow} after the change
of variables $y=\kappa x$. These bounds also prove finiteness of
both moments and complete the proof.
\end{proof}

\section{Nonlinear construction and preservation of the representation}\label{sec:invariance}

In this section, building on Lemma \ref{lem:frozen}, we prove a representation by a probability measure for the unique solution to \eqref{eq:CCF}. More precisely, we prove the following:

\begin{proposition} \label{prop:local}
Let $f$ have the representation in \eqref{def:mixture}, with
$f\in\bigcap_{k\ge0}H^k$ and finite moments $A_0,m_0$.
Then \eqref{eq:CCF}, with $\theta(0)=f$, has a unique smooth Sobolev solution
on every $[0,\tau]$ with $\tau<m_0^{-1}$.
For each such time, it has a unique probability measure representation
\begin{equation}\label{eq:nonlinearrep}
 \theta(t,x)=\mi p_a(x)\dd\mu_t(a),\qquad
 u(t,x)=H\theta(t,x)=\mi h_a(x)\dd\mu_t(a).
\end{equation}
The measures are weakly continuous in time, and
\begin{equation}\label{eq:localmoments}
 \mi a\dd\mu_t(a)\le A_0,\qquad
 \mi a^{-1}\dd\mu_t(a)\le\frac{m_0}{1-m_0t},\qquad
 0\le\theta(t,x)\le f(x).
\end{equation}
\end{proposition}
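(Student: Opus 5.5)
The plan is to construct the solution by a Picard-type iteration in which each step is a frozen linear transport handled by Lemma~\ref{lem:frozen}, and then identify the limit with the smooth Sobolev solution.

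\emph{Reduction to the variable $r$.} If $\theta(t,x)=Q(t,x^2)$ with $Q(t,\cdot)=Q_{\mu_t}$, then $\theta_x=2xQ_r$. By Lemma~\ref{lem:mixture}, $u=\mi h_a\,\dd\mu_t=xB(t,x^2)$ with $B$ as in \eqref{eq:BF}. Hence $u\theta_x=2x^2B\,Q_r=F(t,r)Q_r$, and \eqref{eq:CCF} becomes exactly the frozen equation \eqref{eq:frozenPDE}. The only difference is that the field $F$ is generated by the same measure that represents $Q$.

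\emph{Iteration and a priori bounds.} Set $\mu^{(0)}_t:=\mu_0$. Given $\mu^{(n)}$, apply Lemma~\ref{lem:frozen} with field $F^{(n)}$ built from $\mu^{(n)}$, and call the resulting measures $\nu_t=:\mu^{(n+1)}_t$. By \eqref{eq:momentAflow},
\[
A^{(n+1)}\le A_0,\qquad m^{(n+1)}(t)\le m_0\exp\Bigl(\int_0^t m^{(n)}\Bigr).
\]
Induction with the barrier $M(t)=m_0/(1-m_0t)$ closes, because $m_0\exp\bigl(\int_0^tM\bigr)=m_0/(1-m_0t)=M$. This gives uniform moment bounds on $[0,\tau]$ for $\tau<m_0^{-1}$. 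It also gives $0\le\theta^{(n)}\le f$ by \eqref{eq:ftcomparison}, and uniform Lipschitz bounds $\|\theta^{(n)}_x\|_{L^\infty},\|u^{(n)}_x\|_{L^\infty}\le M(\tau)$ by Lemma~\ref{lem:mixture}.

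\emph{Convergence and smoothness (the main obstacle).} I expect this step to be the hardest. The moment bounds alone give compactness only in a weak sense. My first attempt is to work directly with $\theta^{(n)}$ in $x$-space, since it solves the linear transport $\theta^{(n+1)}_t-u^{(n)}\theta^{(n+1)}_x=0$ with $u^{(n)}=H\theta^{(n)}$. I would then use the standard $H^k$ energy estimate recalled in Appendix~\ref{app:energy}. The uniform bound on $\|u^{(n)}_x\|_{L^\infty}\le M(\tau)$ is exactly the Beale--Kato--Majda-type quantity controlling $H^k$ growth via commutator estimates. This should give uniform $H^k$ bounds on $[0,\tau]$ for every $k$, and hence convergence in $C([0,\tau];L^2)$ by a contraction estimate for differences in $L^2$ that uses those $H^k$ bounds.

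The limit $\theta$ is then the smooth Sobolev solution, unique by the usual $L^2$ difference argument. Its existence on all of $[0,\tau]$ is guaranteed because $\|u_x\|_{L^\infty}$ stays bounded there. If the $H^k$ bounds prove delicate, I would instead use local well-posedness for \eqref{eq:CCF} together with the continuation criterion $\int\|u_x\|_{L^\infty}<\infty$, reserving the iteration for the representation.

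\emph{Representation of the limit.} Tightness of $\mu^{(n)}_t$ on $(0,\infty)$ follows from the uniform bounds on $A$ and $m$, which control both the mass near $\infty$ and the mass near $0$. Pass to a subsequential weak limit $\mu_t$. Since $p_a(x)$ and $h_a(x)$ are bounded and continuous in $a$ for fixed $x$, the representations \eqref{eq:nonlinearrep} pass to the limit. Uniqueness in Lemma~\ref{lem:stieltjes} shows that $\mu_t$ is determined by $\theta(t)$, so the full sequence converges. The same uniqueness, combined with the continuity of $\theta$ in $t$, gives weak continuity of $t\mapsto\mu_t$. Finally, the bounds \eqref{eq:localmoments} follow by lower semicontinuity of the moments under weak convergence, and $0\le\theta\le f$ follows pointwise.
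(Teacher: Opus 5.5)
Your proposal is correct and follows the paper's proof essentially step by step. The shared ingredients are the Picard scheme of frozen linear transports fed through Lemma~\ref{lem:frozen}, the barrier $M(t)=m_0/(1-m_0t)$ (which reproduces itself under $m_0\exp(\int_0^t M)$), uniform $H^k$ bounds from the tame estimate, an $L^2$ contraction for differences, and tightness plus Stieltjes uniqueness to represent the limit and obtain weak continuity. The tame estimate closes as $E_{n+1}'\le C_kM(E_{n+1}+E_n)$ precisely because the representation bounds both $\|u^{(n)}_x\|_{L^\infty}$ and $\|\theta^{(n+1)}_x\|_{L^\infty}$ by $M$, so your fallback via local well-posedness is not needed. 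The only detail you skip is that each new family $\mu^{(n+1)}_t$ must be weakly measurable before Lemma~\ref{lem:frozen} can be reapplied; the paper obtains weak continuity from the time continuity of $\theta^{(n+1)}$, the uniform moment bounds, and Lemma~\ref{lem:mixture-closure}.
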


Let us start with a preliminary technical lemma:

\begin{lemma} 
\label{lem:mixture-closure}
Let $(\lambda_j)_{j\ge1}$ be Borel probability measures on
$(0,\infty)$, and set
\[
 g_j(x):=\int_{(0,\infty)}p_a(x)\,\dd\lambda_j(a).
\]
Assume that, for some finite constants $A_*,M_*>0$,
\[
 \int_{(0,\infty)}a\,\dd\lambda_j(a)\le A_*,
 \qquad
 \int_{(0,\infty)}a^{-1}\,\dd\lambda_j(a)\le M_*, \quad \textup{for all } j \geq 1.
\]
If $g_j(x)\to g(x)$ for every $x\in\mathbb R$, for some function $g: \R \to \R$, then there
exists a unique Borel probability measure $\lambda$ on
$(0,\infty)$ such that
\[
 g(x)=\int_{(0,\infty)}p_a(x)\,\dd\lambda(a)
 \qquad(x\in\mathbb R).
\]
Moreover,
\[
 \int_{(0,\infty)}a\,\dd\lambda(a)\le A_*,
 \qquad
 \int_{(0,\infty)}a^{-1}\,\dd\lambda(a)\le M_*,
\]
and the entire sequence $(\la_j)_j$ converges weakly to $\lambda$. 
\end{lemma}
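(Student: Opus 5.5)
The plan is a tightness-plus-uniqueness argument. The two moment bounds give tightness of $(\lambda_j)$ on $(0,\infty)$ itself, with no mass escaping to $0$ or to $\infty$. By Markov's inequality,
\[
\lambda_j((R,\infty))\le A_*/R \quad\text{and}\quad \lambda_j((0,\epsilon))\le \epsilon M_*
\]
uniformly in $j$. Prokhorov's theorem then shows that every subsequence has a further subsequence converging weakly to a Borel probability measure $\lambda$ on $(0,\infty)$; the limit is a probability measure precisely because of this tightness.

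Along such a subsequence, fix $x\in\R$. The function $a\mapsto p_a(x)=a^2/(a^2+x^2)$ is bounded and continuous on $(0,\infty)$, with value in $[0,1]$ and extending continuously to $a\to0$ when $x\neq0$. Weak convergence therefore gives $g_{j}(x)\to\int p_a(x)\,\dd\lambda(a)$, and since $g_j(x)\to g(x)$ pointwise we obtain $g(x)=\int p_a(x)\,\dd\lambda(a)$. The moment bounds pass to the limit by lower semicontinuity. The maps $a\mapsto a$ and $a\mapsto a^{-1}$ are nonnegative and continuous on $(0,\infty)$, so by the portmanteau theorem (via truncations $\min(a,N)$ and $\min(a^{-1},N)$, then monotone convergence in $N$),
\[
\int a\,\dd\lambda\le\liminf_j\int a\,\dd\lambda_j\le A_*,
\]
and similarly $\int a^{-1}\,\dd\lambda\le M_*$.

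Uniqueness of the representing measure is the step needing care, and I would get it from Lemma~\ref{lem:stieltjes}. If $\lambda,\lambda'$ both represent $g$, then the Stieltjes functions $Q_\lambda(r)$ and $Q_{\lambda'}(r)$ agree for all $r=x^2>0$. Both are holomorphic on $\D$, so they agree on $\D$ by the identity theorem. The uniqueness part of Lemma~\ref{lem:stieltjes} then gives $\lambda=\lambda'$. Alternatively, one can argue directly: the functions $a\mapsto a^2/(a^2+r)$ for $r>0$ are, after the substitution $s=a^2$, the kernels $1-r/(s+r)$, whose Stieltjes transform determines the measure.

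Finally, for convergence of the whole sequence: every subsequence of $(\lambda_j)$ has a further subsequence converging weakly to some representing measure of $g$. By uniqueness this limit is always the same $\lambda$. The standard sub-subsequence argument then yields weak convergence of the entire sequence to $\lambda$. The main point to check carefully is the uniqueness step, in particular that Lemma~\ref{lem:stieltjes}'s uniqueness statement applies to $Q_\lambda$. That lemma's hypotheses (holomorphy on $\D$, nonnegativity, the sign of the imaginary part on $\HH$, and the endpoint limits) all follow from the integral formula for $Q_\lambda$ as in the proof of Lemma~\ref{lem:frozen}.
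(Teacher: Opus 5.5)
Your proposal is correct and follows the paper's proof essentially step by step. The paper also obtains tightness from the two moment bounds and applies Prokhorov, passes to the limit in the bounded continuous kernel $a\mapsto p_a(x)$, recovers the moment bounds via the truncations $\min\{a,n\}$ and $\min\{a^{-1},n\}$ plus monotone convergence, takes uniqueness from Lemma~\ref{lem:stieltjes}, and uses the subsequence argument for the whole sequence. Your identity-theorem step simply makes explicit the uniqueness argument that the paper only cites.
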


\begin{proof}
For $0<\varepsilon<R$, the moment bounds give
\[
 \lambda_j\bigl((0,\infty)\setminus[\varepsilon,R]\bigr)
 \le \varepsilon M_*+\frac{A_*}{R}.
\]
Thus $(\lambda_j)_j$ is tight on $(0,\infty)$, and Prokhorov's
theorem \cite[Theorem~5.1, p.~59]{Billingsley} gives a weakly
convergent subsequence with limit a Borel probability measure
$\lambda$ on $(0,\infty)$.

Since $a\mapsto p_a(x)$ is bounded and continuous for each
$x\in\mathbb R$, passage to the limit gives
\[
 g(x)=\int_{(0,\infty)}p_a(x)\,\dd\lambda(a).
\]
The uniqueness of $\lambda$ follows by arguing as in the proof of
Lemma~\ref{lem:stieltjes}. Hence, every weakly convergent subsequence has the same limit. Tightness then implies
$\lambda_j\rightharpoonup\lambda$ for the entire sequence.

Finally, weak convergence applied to the bounded continuous
functions $a\mapsto \min\{ a, n\}$ and $a\mapsto \min\{a^{-1},n\}$,
followed by monotone convergence as $n\to\infty$, yields
\[
 \int_{(0,\infty)}a\,\dd\lambda(a)\le A_*,
 \qquad
 \int_{(0,\infty)}a^{-1}\,\dd\lambda(a)\le M_*.
\]
\end{proof}

\begin{proof}[Proof of Proposition \ref{prop:local}]
Let $\mu_0$ denote the probability measure representing $f$.
Its uniqueness follows from Lemma~\ref{lem:stieltjes}, as explained
in Lemma \ref{lem:mixture-closure}. Since $a^{-1}>0$ on $(0,\infty)$ and $\mu_0$ has
mass one, $0<m_0<\infty$. Fix an arbitrary
$0<\tau<m_0^{-1}$ and put
\begin{equation}\label{eq:local-envelope-definitions}
 M(t):=\frac{m_0}{1-m_0t},\qquad
 \Gamma(t):=\int_0^t M(s)\dd s=-\log(1-m_0t),
 \qquad 0\le t\le\tau.
\end{equation}
In particular, $M$ is increasing, $\Gamma(\tau)<\infty$, and
$m_0e^{\Gamma(t)}=M(t)$. We construct the solution on this fixed interval
and keep the same interval for every Sobolev order.
 
For the sake of clarity, we split the rest of the proof into several steps.

\noindent
\textbf{Step 1.} \emph{Construction of each smooth iterate and its probability measure representation.}
Set $\theta^{(0)}(t,x)=f(x)$ and $\mu_t^{(0)}=\mu_0$.
We construct $\theta^{(n+1)}$ by solving the equation
\begin{equation}\label{eq:Picard}
 \partial_t\theta^{(n+1)}=u^{(n)}\partial_x\theta^{(n+1)},
 \qquad u^{(n)}=H\theta^{(n)},\qquad \theta^{(n+1)}(0,\cdot)=f,
\end{equation}
and we set
\[
 A_n(t):=\int_{(0,\infty)} a\dd\mu_t^{(n)}(a) \quad \textup{and} \quad 
 m_n(t):=\int_{(0,\infty)} a^{-1}\dd\mu_t^{(n)}(a).
\]
The induction hypotheses are then that $\theta^{(n)}$ belongs to
$C([0,\tau];H^k)$ for every integer $k\ge0$, that it has a
weakly continuous probability measure representation $\mu_t^{(n)}$, and
that its moments satisfy $A_n(t)\le A_0$, $m_n(t)\le M(t)$.
Note that these hypotheses hold for $n=0$.
 
The method of characteristics gives existence and uniqueness, and the required Sobolev regularity and time continuity follow from standard linear transport estimates.
 
The induction hypotheses and Lemma~\ref{lem:mixture} imply
\[
 u^{(n)}(t,x)=xB_n(t,x^2) \quad \textup{with} \quad
 B_n(t,r)=\int_{(0,\infty)}\frac a{a^2+r}\dd\mu_t^{(n)}(a).
\]
In particular $u^{(n)}(t, \cdot)$ is odd, and so are the associated characteristics in space by uniqueness. Since $f$ is even, it then    follows that $\theta^{(n+1)}(t,\cdot)$ is even as well. Moreover, using the fact that the associated characteristics are odd in space, we get that $\theta^{(n+1)}(t,0)=f(0)=1$.

For $r>0$, put $Q_{n+1}(t,r)=\theta^{(n+1)}(t,\sqrt r)$.
The chain rule in \eqref{eq:Picard} gives
\[
 \partial_tQ_{n+1}(t,r)=2rB_n(t,r)\partial_rQ_{n+1}(t,r),
 \qquad Q_{n+1}(0,r)=\int_{(0,\infty)}\frac{a^2}{a^2+r}\dd\mu_0(a).
\]
This is exactly the equation in Lemma~\ref{lem:frozen}. Its
coefficient measures are weakly continuous by the induction
hypotheses, and therefore weakly measurable; their moments satisfy
$A_n(t)\le A_0$, $m_n(t)\le M(t)$ on $[0,\tau]$, so both are integrable
in time. Thus, all hypotheses of Lemma \ref{lem:frozen} are satisfied.
Its characteristic solution agrees with $Q_{n+1}$, by the
uniqueness established there. Consequently $\theta^{(n+1)}(t,\cdot)$
has a unique probability measure representation $\mu_t^{(n+1)}$ for
$x>0$, for $x<0$ by evenness, and for $x=0$ by normalization.
The moment bounds in the same lemma give
\[
 A_{n+1}(t)\le A_0 \quad \textup{and} \quad
 m_{n+1}(t)\le m_0\exp\left(\int_0^t m_n(s)\dd s\right)
             \le m_0e^{\Gamma(t)}=M(t).
\]
Finally, using \eqref{eq:ftcomparison}, we get that
$$
0 \leq \te^{(n+1)}(t,x) \leq f(x), \quad \textup{for } (t,x) \in [0,\tau] \times \RR.
$$
The moment bounds are uniform in time, and by construction we have pointwise
time continuity of $\theta^{(n+1)}$. Then, using Lemma \ref{lem:mixture-closure} we infer weak continuity of $\mu_t^{(n+1)}$. This
completes the induction and justifies the next use of Lemma \ref{lem:frozen}. In particular, no measurability or continuity of an
as-yet-unconstructed measure family was assumed.

In short, for all $n\ge0$,  we have obtained  $\theta^{(n)}$ satisfying
\begin{equation}\label{eq:envelope}
 A_n(t)\le A_0,\qquad m_n(t)\le M(t),\qquad
 0\le\theta^{(n)}(t,x)\le f(x).
\end{equation}
Moreover, $\te^{(n)} \in C([0,\tau]; H^k)$ for every $k \geq 0$ and it has a
weakly continuous probability measure representation $(\mu_t^{(n)})_{0 \leq t \leq \tau}$.

\noindent \textbf{Step 2.} \emph{Convergence in $C([0,\tau]; L^2)$ } First, for an arbitrary but fixed integer $k\ge2$, let 
$$
E_n(t)=\norm{\theta^{(n)}(t,\cdot)}_{H^k}.
$$
Combining Lemmas \ref{lem:mixture} and \ref{lem:energy} with \eqref{eq:envelope}, it follows that
\[
 E_{n+1}'(t)\le C_kM(t)(E_{n+1}(t)+E_n(t)),
 \qquad E_{n+1}(0)=\norm f_{H^k}.
\]
Setting $a_k(t) = C_k \Gamma(t)$ and $F_k = \|f\|_{H^k}$, and using induction, one gets that
\begin{equation}\label{eq:local-uniform-sobolev}
 E_n(t)\le F_ke^{2a_k(t)}, \quad \textup{for all } n \geq 0, \textup{ and all } t \in [0,\tau].
\end{equation}

Next, we set $d^{(n)}:=\theta^{(n+1)}-\theta^{(n)}$ for all $n\ge0$.
For all $n\ge1$, subtracting consecutive equations in \eqref{eq:Picard}, we get that
\begin{equation}\label{eq:difference}
 d^{(n)}_t=u^{(n)}d^{(n)}_x+(Hd^{(n-1)})\theta_x^{(n)},
 \qquad d^{(n)}(0,\cdot)=0.
\end{equation}
Pairing with $d^{(n)}$ in $L^2$
and integrating by parts gives
\begin{align*}
 \frac12\frac{\dd}{\dd t}\norm{d^{(n)}}_{L^2}^2
 &=-\frac12\int_\R u_x^{(n)}(d^{(n)})^2\dd x
   +\int_\R d^{(n)}(Hd^{(n-1)})\theta_x^{(n)}\dd x\\
 &\le\frac12M(t)\norm{d^{(n)}}_{L^2}^2
       +M(t)\norm{d^{(n)}}_{L^2}\norm{d^{(n-1)}}_{L^2}.
\end{align*}
Let $D_n(t)=\norm{d^{(n)}(t,\cdot)}_{L^2}$. Dividing the preceding inequality by $D_n$ where $D_n>0$, and using $\sqrt{D_n^2+\varepsilon^2}$ followed by $\varepsilon\downarrow0$ at its zeros, yields 
\[
 D_n'(t)\le M(t)(D_n(t)+D_{n-1}(t)),\qquad D_n(0)=0.
\]
Thus, taking into account the definition of $\Gamma(t)$ (see \eqref{eq:local-envelope-definitions}), we get that
\[
 Z_n(t):=e^{-\Gamma(t)}D_n(t)
 \le\int_0^t M(s)Z_{n-1}(s)\dd s, \quad \textup{for all } n \geq 1.
\]
The quantity $D_*:=\sup_{0\le t\le\tau}Z_0(t)$ is finite by \eqref{eq:local-uniform-sobolev}. Then, by induction
\begin{equation}\label{eq:local-factorial-convergence}
 Z_n(t)\le D_*\frac{\Gamma(t)^n}{n!},\qquad
 D_n(t)\le D_*e^{\Gamma(t)}\frac{\Gamma(t)^n}{n!}, \quad \textup{for all } n \geq 0.
\end{equation}
This implies that $\sum_{n\ge0}\sup_{t\le\tau}D_n(t)$ is finite.
For $q>p$, the identity
$\theta^{(q)}-\theta^{(p)}=\sum_{n=p}^{q-1}d^{(n)}$ therefore
shows that $\theta^{(n)}$ is Cauchy in $C([0,\tau];L^2)$.
Completeness gives a limit $\theta \in C([0,\tau];L^2)$ .

\noindent \textbf{Step 3.} \emph{Convergence in all Sobolev spaces and uniqueness.} This is standard, so we just give a sketch. Interpolation with the higher order uniform bounds implicit in the proof of \eqref{eq:local-uniform-sobolev} gives convergence in $C([0,\tau]; H^k).$ Moreover, a standard bootstrap argument gives smoothness. Finally, a classical application of Gr\"onwall's inequality shows uniqueness among all smooth Sobolev solutions. 

\noindent \textbf{Step 4.} \emph{Conclusion}. Fix $t\in[0,\tau]$. By Step 3, $\theta^{(n)}(t,\cdot)$ converges
to $\theta(t,\cdot)$ uniformly. Moreover, the probability measures
$\mu_t^{(n)}$ have the moment bounds $A_n(t)\le A_0$ and
$m_n(t)\le M(t)$. Lemma \ref{lem:mixture-closure}, applied with $A_*=A_0$ and $M_*=M(t)$,
therefore gives a unique probability measure $\mu_t$ such that
\[
 \mu_t^{(n)}\longrightarrow\mu_t\quad\hbox{weakly},\quad
 \theta(t,x)=\int_{(0,\infty)} p_a(x)\dd\mu_t(a),\quad
 \int_{(0,\infty)} a\dd\mu_t(a)\le A_0,\quad
 \int_{(0,\infty)} a^{-1}\dd\mu_t(a)\le M(t).
\]
Passing to the pointwise limit in \eqref{eq:envelope} also gives
$0\le\theta(t,x)\le f(x)$. These statements prove in particular all the
inequalities in \eqref{eq:localmoments}.

The first moment is finite, so Lemma~\ref{lem:mixture} applies
to this representation and yields
\[
 u(t,x)=H\theta(t,x)=\int_{(0,\infty)} h_a(x)\dd\mu_t(a).
\]
The equality holds for the continuous representatives, and is
consistent with the Sobolev field $u=H\theta$ constructed in Step 3.
This proves both identities in \eqref{eq:nonlinearrep}. The same
lemma also identifies the inverse moment with $u_x(t,0)$. Let us explicitly point out that  no differentiation of the measure in time is required.

Finally, $\theta\in C([0,\tau];H^1)$ gives pointwise time
continuity, and the measures $\mu_t$ have uniform moment bounds
$A_0$ and $M(\tau)$. The time-continuity conclusion inferred from Lemma \ref{lem:mixture-closure}
therefore proves their weak continuity on $[0,\tau]$, including
at $t=0$, where uniqueness gives $\mu_{t=0}=\mu_0$.
Since $\tau<m_0^{-1}$ was arbitrary, we have proved the proposition
on every asserted interval. Solutions constructed with different
choices of $\tau$ agree on their common interval by uniqueness.
\end{proof}

\begin{corollary}\label{cor:invariance}
For the datum $\theta_0(x)= (1+x^2)^{-1}$, representation \eqref{eq:nonlinearrep} holds
at every $0 \leq t<T$. The representing probabilities satisfy
\begin{equation}\label{eq:globalmoments}
 A(t):=\mi a\dd\mu_t(a)=\frac1\pi\norm{\theta(t,\cdot)}_{L^1}\le1,
 \qquad
 m(t):=m_{\mu_t}=\mi a^{-1}\dd\mu_t(a)=u_x(t,0).
\end{equation}
Moreover, 
\begin{equation}\label{eq:amplitude}
 0\le\theta(t,x)\le\frac1{1+x^2},\qquad
 \theta(t,0)=\norm{\theta(t,\cdot)}_{L^\infty}=1,\qquad
 \norm{u(t,\cdot)}_{L^\infty}\le\frac12,
\end{equation}
 and
\begin{equation}\label{eq:globalderivatives}
 \norm{u_x(t,\cdot)}_{L^\infty}=m(t),\qquad
 \norm{\theta_x(t,\cdot)}_{L^\infty} \le m(t).
\end{equation}
\end{corollary}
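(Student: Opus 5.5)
The plan is to extend the local result of Proposition~\ref{prop:local} across the whole maximal interval $[0,T)$ by a continuation argument, restarting the proposition at later times. For $\theta_0(x)=(1+x^2)^{-1}$ we have $\mu_0=\delta_1$, so $A_0=m_0=1$ and $\theta_0\in\bigcap_k H^k$. Proposition~\ref{prop:local} therefore gives the representation on $[0,\tau]$ for every $\tau<1$. Let $T_*\le T$ be the supremum of times $s$ such that \eqref{eq:nonlinearrep} holds on $[0,s]$ with the bound $A(t)\le1$ there. I would argue by contradiction that $T_*=T$.

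Suppose $T_*<T$. On $[0,T_*]$ the smooth solution is bounded in every $H^k$, because it belongs to $C([0,T);H^k)$ and $[0,T_*]$ is compact. Hence $\sup_{t\le T_*}\|u_x(t,\cdot)\|_{L^\infty}<\infty$. For $t<T_*$, Remark~\ref{rem:moment} and Lemma~\ref{lem:mixture} identify $m(t)=u_x(t,0)$, so $m$ is bounded on $[0,T_*)$, say by $\bar m$. Choose $t_0<T_*$ close to $T_*$ and apply Proposition~\ref{prop:local} with initial datum $f=\theta(t_0,\cdot)$. Its hypotheses hold: $f$ lies in every $H^k$ and has a representation with $A(t_0)\le1$ and $m(t_0)\le\bar m$. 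The proposition yields a representation on $[t_0,t_0+\tau]$ for any $\tau<\bar m^{-1}$, with first moment at most $A(t_0)\le 1$. By uniqueness of smooth Sobolev solutions, this solution coincides with $\theta$. If $t_0>T_*-\bar m^{-1}/2$, this contradicts the definition of $T_*$. The one point to check carefully is that the bound $\bar m$ is uniform up to $T_*$, which is exactly what the $H^k$ control on the compact interval supplies. I expect this to be the only step requiring care.

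Once the representation holds on $[0,T)$, the remaining claims follow directly.
\begin{itemize}
\item \textbf{First moment.} Lemma~\ref{lem:mixture} gives $A(t)=\frac2\pi\int_0^\infty\theta\,\dd x=\frac1\pi\|\theta\|_{L^1}$, using evenness. Chaining \eqref{eq:localmoments} across restarts gives $A(t)\le A_0=1$.
\item \textbf{Inverse first moment.} The identity $m(t)=u_x(t,0)$ comes from Lemma~\ref{lem:mixture}.
\item \textbf{Pointwise bound on $\theta$.} Chaining $0\le\theta(t,x)\le\theta(t_0,x)$ across restarts gives $0\le\theta\le(1+x^2)^{-1}$.
\item \textbf{Amplitude.} We have $\theta(t,0)=\int p_a(0)\,\dd\mu_t=1$, which together with the previous bound gives $\|\theta\|_{L^\infty}=1$. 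Lemma~\ref{lem:mixture} gives $\|u\|_{L^\infty}\le\frac12$.
\item \textbf{Derivatives.} Lemma~\ref{lem:mixture} gives $|u_x|\le m=u_x(t,0)$, hence $\|u_x\|_{L^\infty}=m(t)$, and it also gives $\|\theta_x\|_{L^\infty}\le m(t)$.
\end{itemize}
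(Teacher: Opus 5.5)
Your proposal is correct and follows the paper's argument: you restart Proposition~\ref{prop:local}, with a uniform restart length coming from the bound on $m(t)=u_x(t,0)$ over compact subintervals of $[0,T)$ (Sobolev embedding plus Remark~\ref{rem:moment}), and your supremum/contradiction formulation is equivalent to the paper's ``finitely many restarts cover the interval.'' One small tidy-up: the pointwise bound $0\le\theta\le(1+x^2)^{-1}$ does not follow from the representation with $A\le1$ alone, and your supremum argument does not by itself produce the chain of restarts you invoke, so either build that bound into the definition of $T_*$ exactly as you did for $A(t)\le1$, or observe that on each compact $[0,t]$ finitely many restarts of uniform length supply the chain.
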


\begin{proof}
The initial measure is $\mu_0 = \delta_1$. Restart Proposition~\ref{prop:local}
at each time when the solution is smooth and has this representation.
On every compact smooth-time interval,
$m(t)=u_x(t,0)$ is bounded by Sobolev embedding and
Remark~\ref{rem:moment}; the allowed restart lengths therefore have a
positive lower bound. Finitely many restarts cover the interval, and
uniqueness identifies the resulting solutions. Each restart decreases
the scalar pointwise and preserves $A(t)\le1$. The other assertions follow
from Lemma~\ref{lem:mixture} and the probability normalization.
\end{proof}

\section{A kernel identity for the forcing}\label{sec:forcing}
For $t \in [0,T)$, let $(\mu_t)_t$ be the family of probability measures in
\eqref{eq:nonlinearrep}. Its first and inverse first moments are finite
by Corollary~\ref{cor:invariance}. Likewise, set
\[
 B(t,r) :=\int_{(0,\infty)}\frac{a}{a^2+r}\dd\mu_t(a),
 \quad r\ge0.
\]
Then, \eqref{eq:nonlinearrep} reads $u(t,x)=xB(t,x^2)$. In particular, it follows that $B(t,0)=m(t)$ and
$u_x(t,0)=m(t)$.

We first derive the evolution equation used in this section. Since the
solution is smooth in every Sobolev space, $H$ commutes with
$\partial_t$ and $\partial_x$. Thus, using that $u=H\theta$ and $H^2=-I$, we get that
\[
 Hu_x=H^2\theta_x=-\theta_x.
\]
Consequently, the scalar equation in \eqref{eq:CCF} can be written
$\theta_t=-uHu_x$. Applying then $H$ gives
$u_t=-H(uHu_x)$, and subtracting $uu_x$ from both sides yields
\begin{equation}\label{eq:burgers}
 u_t-uu_x=-\mathcal N(u),\qquad
 \mathcal N(u):=H(uHu_x)+uu_x=-H(u\theta_x)+uu_x,
\end{equation}
which is in turn equivalent to
\[
 Du=-\mathcal N(u),
\]
where we recall that $D := \partial_t-u\partial_x$.

\begin{lemma}\label{lem:force}
For $\al > 0$, set ${\rm D }_\alpha(x) := \alpha^2 + x^2$. Then, it follows that
\begin{equation}\label{eq:forcekernel}
 \frac{\mathcal N(u)(t,x)}x
 =\iint_{(0,\infty)^2}\frac{ab({\rm D }_a(x)+{\rm D }_b(x))}{2(a+b)^2 {\rm D }_a(x){\rm D }_b(x)}\, \dd \mu_t(a)\,\dd \mu_t(b)
 \ge\frac{u(t,x)^2}{4x^2}, \quad \textup{for } x > 0 \textup{ and } t \in [0,T).
\end{equation}
In particular, 
\begin{equation}\label{eq:central-force}
 \mathcal N(u)_x(t,0)=\frac12m(t)^2-I(t) \quad \textup{with} \quad
 I(t):=\iint_{(0,\infty)^2}\frac{\dd\mu_t(a)\,\dd\mu_t(b)}{(a+b)^2}.
\end{equation}
\end{lemma}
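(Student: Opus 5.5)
The plan is to reduce $\mathcal N(u)$ to a double integral against $\mu_t\otimes\mu_t$ of an explicit kernel, using the complex structure of the building blocks. For $a>0$ set
\[
 F_a(x):=p_a(x)+ih_a(x)=\frac{a(a+ix)}{a^2+x^2}=\frac{a}{a-ix}.
\]
This is the boundary value of $z\mapsto a/(a-iz)$, which is holomorphic in $\HH$ and decays at infinity. This is consistent with $Hp_a=h_a$ in \eqref{eq:Hkernels}. More generally, $H$ acts as multiplication by $-i$ on boundary values of decaying functions holomorphic in $\HH$, and by $+i$ on those holomorphic in the lower half-plane.

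By \eqref{eq:nonlinearrep} and Lemma~\ref{lem:mixture},
\[
 u=\mi h_a\,\dd\mu_t(a),\qquad \theta_x=\mi p_b'\,\dd\mu_t(b),
\]
with absolute convergence and the uniform bounds $|h_a|\le\frac12$, $|p_b'|\le 1/b$. Since $m(t)<\infty$, Fubini lets us write
\[
 u\theta_x=\iint h_a p_b'\,\dd\mu_t\,\dd\mu_t .
\]
Justifying the interchange $H\iint=\iint H$ will be done by the duality argument of Lemma~\ref{lem:mixture}. Each product $h_ap_b'$ is $O(1/(ab))$ in $L^2$ norm, up to factors controlled by $A(t)$ and $m(t)$, which is enough for this. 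This gives
\[
 \mathcal N(u)=\iint\bigl(-H(h_ap_b')+h_ah_b'\bigr)\,\dd\mu_t(a)\,\dd\mu_t(b),
\]
and we symmetrize in $(a,b)$.

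To compute $H(h_ap_b')$ I write
\[
 h_a=\tfrac1{2i}\bigl(F_a-\overline{F_a}\bigr),\qquad p_b'=\tfrac12\bigl(F_b'+\overline{F_b'}\bigr).
\]
The products $F_aF_b'$ and $\overline{F_aF_b'}$ are holomorphic in one half-plane each, so $H$ acts on them as $\mp i$. The mixed products $F_a\overline{F_b'}$ have one pole in each half-plane, at $x=-ia$ and $x=ib$. I split them by partial fractions into a part holomorphic in $\HH$ and a part holomorphic in the lower half-plane, and apply the multipliers. The factors $1/(a+b)$ and $1/(a+b)^2$ arise exactly from evaluating residues at $-ia$ against $(b+ix)^{-k}$. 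After adding $h_ah_b'$, symmetrizing, and dividing by $x$ (the integrand is odd in $x$), the target is the kernel in \eqref{eq:forcekernel}:
\[
 \frac{ab\,({\rm D}_a+{\rm D}_b)}{2(a+b)^2{\rm D}_a{\rm D}_b}.
\]
This algebraic bookkeeping is the main obstacle. As a check I would verify it at $x=0$ and at infinity, and possibly for $a=b$ against the explicit solution of the one-scale case.

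The inequality is then a pointwise kernel comparison. Since
\[
 \frac{u^2}{4x^2}=\frac14\iint\frac{ab}{{\rm D}_a{\rm D}_b}\,\dd\mu_t\,\dd\mu_t,
\]
and $\mu_t\otimes\mu_t\ge0$, it suffices to have
\[
 \frac{{\rm D}_a+{\rm D}_b}{2(a+b)^2}\ge\frac14,
\]
that is, $2(a^2+b^2+2x^2)-(a+b)^2=(a-b)^2+4x^2\ge0$.

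For \eqref{eq:central-force}, note that $\mathcal N(u)$ is odd and $C^1$, so $\mathcal N(u)(t,0)=0$ and $\mathcal N(u)_x(t,0)=\lim_{x\downarrow0}\mathcal N(u)/x$. By monotone (or dominated) convergence, using $m(t)<\infty$, this limit equals
\[
 \iint\frac{a^2+b^2}{2ab(a+b)^2}\,\dd\mu_t\,\dd\mu_t .
\]
Writing $a^2+b^2=(a+b)^2-2ab$, the integrand becomes $\frac1{2ab}-\frac1{(a+b)^2}$. Integrating gives $\frac12m(t)^2-I(t)$, and each piece is finite since $I\le\frac14m^2$.
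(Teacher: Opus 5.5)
Your proposal is correct, and it computes the kernel by a genuinely different route from the paper.

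\textbf{How the paper does it.} The paper never splits into half-plane parts. It introduces $q_{a,b}(y)=y^2/((y^2+a^2)(y^2+b^2))=(p_b-p_a)/(b^2-a^2)$, so that $Hq_{a,b}$ follows from $Hp_a=h_a$ alone; the case $a=b$ is recovered by an $L^2$ continuity argument. It then uses $h_ap_b'=ab\,\partial_bq_{a,b}$ to get $x^{-1}H(h_ap_b')=ab\,\partial_bJ_{a,b}(x^2)$, where $J_{a,b}(r)=(r-ab)/((a+b){\rm D}_a{\rm D}_b)$. The parameter differentiation, and the interchange of $H$ with the $\mu_t$-integrals, are justified only for parameters in a truncated set $[\varepsilon,R]^2$. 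The truncation is then removed by combining $L^2$ convergence of $\mathcal N_{E_j}$ with locally uniform convergence of the kernel integrals, under the majorant $0\le K_{a,b}\le 1/(ab)$.

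\textbf{What your route buys.} Your Hardy-space splitting gives $H(h_ap_b')$ in closed form for each fixed pair $(a,b)$. There is no parameter differentiation, and no special case $a=b$, since the poles $-ia$ and $ib$ of the mixed product never collide. So a single duality step over all of $(0,\infty)^2$ suffices. The price is the bookkeeping you left undone.

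\textbf{The bookkeeping does close.} Since $\overline{F_b'(x)}=-ib/(b+ix)^2$,
\[
 F_a\overline{F_b'}=-iab\left[\frac{1}{(a+b)^2(a-ix)}+\frac{1}{(a+b)^2(b+ix)}+\frac{1}{(a+b)(b+ix)^2}\right].
\]
Then $H(h_ap_b')=\frac12\bigl[-\operatorname{Re}(F_aF_b')+\operatorname{Im}H(F_a\overline{F_b'})\bigr]$ gives
\[
 \frac{H(h_ap_b')(x)}{x}=-\frac{ab(x^2-b^2-2ab)}{2{\rm D}_a{\rm D}_b^2}-\frac{ab({\rm D}_a+{\rm D}_b)}{2(a+b)^2{\rm D}_a{\rm D}_b}-\frac{ab^2}{(a+b){\rm D}_b^2}.
\]
This coincides with the paper's $ab\,\partial_bJ_{a,b}(x^2)$. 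Adding $h_ah_b'/x$ and symmetrizing yields exactly the kernel in \eqref{eq:forcekernel}.

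Your kernel comparison agrees with the paper. Your treatment of $x=0$ is slightly cleaner than the paper's: oddness of $\mathcal N(u)$, plus monotone convergence, using that $K_{a,b}(r)$ decreases in $r$.

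\textbf{One correction in the justification.} $\|h_ap_b'\|_{L^2}$ is not $O(1/(ab))$; by scaling, $\|h_bp_b'\|_{L^2}=c\,b^{-1/2}$. What you need is
\[
 \|h_ap_b'\|_{L^2}\le\|h_a\|_{L^2}\|p_b'\|_{L^\infty}\le(\pi a/2)^{1/2}b^{-1}.
\]
This is $\mu_t\otimes\mu_t$-integrable because $\int\sqrt a\,\dd\mu_t\le A(t)^{1/2}$ and $m(t)<\infty$.

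The bound $O(|x|/(ab))$ is true pointwise for $H(h_ap_b')(x)$. Read it off from the form $ab\,\partial_bJ_{a,b}$, where each term is $O(1/(ab))$; the individual terms of the display above are not, because two of them cancel when $a\gg b$. This pointwise bound is what makes the double integral continuous in $x$. That continuity is what upgrades the a.e.\ identity produced by duality to every $x>0$.
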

\begin{proof}
We suppress the fixed time and write $\mu=\mu_t$ and $B(r)=B(t,r)$. Thus
\[
 \theta(x)=\int_{(0,\infty)} p_b(x)\dd\mu(b),\qquad
 u(x)=\int_{(0,\infty)} h_a(x)\dd\mu(a),
\]
and the two moments
\[
 A=\int_{(0,\infty)} a\dd\mu(a),\qquad m=\int_{(0,\infty)} a^{-1}\dd\mu(a)
\]
are finite. 

For $a,b>0$, we set
\[
 q_{a,b}(y):=\frac{y^2}{(y^2+a^2)(y^2+b^2)}.
\]
If $a\ne b$, it follows that
\begin{equation}\label{eq:forcing-partial-fractions}
 q_{a,b}(y)
 =-\frac{a^2}{b^2-a^2}\frac1{y^2+a^2}
  +\frac{b^2}{b^2-a^2}\frac1{y^2+b^2} = \frac{1}{b^2-a^2} (p_b(y) - p_a(y)),
\end{equation}
and so, since $Hp_a=h_a$, that
\begin{align*}
 Hq_{a,b}(x) 
 & = \frac{1}{b^2-a^2} (h_b(x)-h_a(x))=\frac{x(x^2-ab)}{(a+b)(a^2+x^2)(b^2+x^2)}.
\end{align*}
Setting 
\[
r=x^2,\qquad {\rm D}_a=a^2+r,\qquad {\rm D}_b=b^2+r,\qquad S=a+b,
\]
we have therefore proved that
\begin{equation}\label{eq:forcing-J}
 J_{a,b}(r):=\frac1xHq_{a,b}(x)
 =\frac{r-ab}{S {\rm D}_a{\rm D}_b} \quad \textup{for all }a\ne b,\ x\ne0.
\end{equation}
The case $a=b$ follows by continuity without any appeal to a formal cancellation.
Fix $a>0$ and let $b\to a$. Then $q_{a,b}\to q_{a,a}$ in
$L^2$ by dominated convergence, so the $L^2$ boundedness of $H$ gives
$Hq_{a,b}\to Hq_{a,a}$ in $L^2$. The rational expression on the
right side of the formula for $Hq_{a,b}$ also converges in $L^2$ to
the expression obtained by setting $b=a$. Uniqueness of the $L^2$ limit proves the identity for
$a=b$ almost everywhere in $x$. Since both sides have continuous
representatives the identity holds for every
$x$. Hence, \eqref{eq:forcing-J} is valid for all $a,b>0$ and
$x\ne0$. Moreover, the formula for $Hq_{a,b}$ also extends to $x=0$ after
multiplication by $x$.

We next relate this transform to the product in
$-H(u\theta_x)$. By direct calculus, 
\begin{equation}\label{eq:forcing-parameter-derivative}
 h_a(y)p_b'(y)=ab\,\partial_bq_{a,b}(y).
\end{equation}
Also, on any compact parameter square
$[\varepsilon,R]^2\subset(0,\infty)^2$, the map
$(a,b)\mapsto q_{a,b}$ is continuously differentiable with values
in $L^2$. Since $H$ is bounded
on $L^2$, parameter differentiation commutes with $H$ there. Thus,
\begin{equation}\label{eq:forcing-H-product}
 \frac1xH(h_ap_b')(x)
 =\frac{ab}{x}\partial_bHq_{a,b}(x)
 =ab\,\partial_bJ_{a,b}(r), \quad \textup{for } (a,b) \in [\ep,R]^2,\ x \neq 0.
\end{equation}
This identity is obtained first in $L^2$, but its two sides have
continuous representatives, so it holds pointwise.
 
For the moment, we restrict $a$ and $b$ to
$E:=[\varepsilon,R]$, and define
\[
 \theta_E(x):=\int_Ep_b(x)\dd\mu(b),\qquad
 u_E(x):=\int_Eh_a(x)\dd\mu(a),
\]
and
\[
 \mathcal N_E(x):=-H(u_E\theta_{E,x})(x)+u_E(x)u_{E,x}(x).
\]
Here we are using the shortened notation $\pd_x \te_E = \te_{E,x}$ and $\pd_x u_E = u_{E,x}$. 
All parameter integrals on $E$ can be interchanged with products,
derivatives and $H$. Therefore
\[
 \mathcal N_E
 =\iint_{E^2}\bigl[-H(h_ap_b')+h_ah_b'\bigr]
          \dd\mu(a)\dd\mu(b).
\]
This is an equality in $L^2$, but both sides have continuous
representatives, so all subsequent pointwise evaluations are
legitimate. Also, the product measure $\mu\otimes\mu$ is invariant under exchanging
$a$ and $b$. Hence,
\begin{equation}\label{eq:forcing-symmetrization}
 \mathcal N_E
 =\frac12\iint_{E^2}
   \bigl[-H(h_ap_b')-H(h_bp_a')+(h_ah_b)'\bigr]
   \dd\mu(a)\dd\mu(b).
\end{equation}

Combining \eqref{eq:forcing-H-product} with its version after
exchanging $a$ and $b$, we see from
\eqref{eq:forcing-symmetrization} that the symmetrized kernel of
$\mathcal N_E(x)/x$ is
\begin{equation}\label{eq:forcing-K-definition}
 K_{a,b}(r):=\frac{(h_ah_b)'(x)}{2x}
 -\frac{ab}{2}(\partial_a+\partial_b)J_{a,b}(r).
\end{equation}
On the other hand, by direct calculations
\begin{align*}
 K_{a,b}(r)
 &=\frac{ab}{2{\rm D}_a{\rm D}_b}
   +\frac{ab(r-ab)}{S^2{\rm D}_a{\rm D}_b}=\frac{ab\{S^2+2(r-ab)\}}{2S^2{\rm D}_a{\rm D}_b}=\frac{ab(a^2+b^2+2r)}{2S^2{\rm D}_a{\rm D}_b}
  =\frac{ab({\rm D}_a+{\rm D}_b)}{2(a+b)^2{\rm D}_a{\rm D}_b} \geq 0.
\end{align*}
We have thus proved that, for every truncated parameter interval $E$ and all
$x>0$,
\begin{equation}\label{eq:forcing-truncated}
 \frac{\mathcal N_E(x)}x
 =\iint_{E^2}K_{a,b}(x^2)\dd\mu(a)\dd\mu(b).
\end{equation}

Next, choose $E_j := [j^{-1},j]$. Since $\int_{(0,\infty)}\sqrt a\,\dd\mu(a)<\infty$, the kernel norm in \eqref{eq:Hkernels} gives $u_{E_j}\to u$ in $L^2$. Also, the bounds $|p_a'|,|h_a'|\le a^{-1}$ give
\[
 \norm{\theta_{E_j,x}-\theta_x}_{L^\infty}
 +\norm{u_{E_j,x}-u_x}_{L^\infty}
 \le 2\int_{(0,\infty)\setminus E_j}a^{-1}\dd\mu(a)\longrightarrow0.
\]
Consequently both $u_{E_j}\theta_{E_j,x}\to u\theta_x$ and $u_{E_j}u_{E_j,x}\to uu_x$ in $L^2$. The $L^2$ boundedness of $H$ therefore implies
\begin{equation}\label{eq:forcing-N-convergence}
\cN_{E_j} \xrightarrow{j \to \infty} \cN(u) \quad \textup{in } L^2. 
\end{equation}
On the other hand, it follows that
\begin{equation} \label{eq:forcing-kernel-majorant}
0 \leq K_{a,b}(r) \leq \frac{1}{ab}, \quad \textup{for all } a, b > 0, \ r \geq 0\,. 
\end{equation}
Moreover,
$$
\iint_{(0,\infty)^2} \frac{1}{ab} \, \dd \mu(a) \dd \mu(b) = m^2 < \infty\,.
$$
Setting
\[
 F(x):=x\iint_{(0,\infty)^2} K_{a,b}(x^2)\dd\mu(a)\dd\mu(b),
 \quad x\ge0,
\]
the bound \eqref{eq:forcing-kernel-majorant} and \eqref{eq:forcing-truncated} give, for every $R>0$,
\[
 \sup_{0\le x\le R}|\mathcal N_{E_j}(x)-F(x)|
 \le R\iint_{(0,\infty)^2\setminus E_j^2}\frac{\dd\mu(a)\dd\mu(b)}{ab}
 \longrightarrow0.
\]
Here the convergence follows from the integrability of $(ab)^{-1}$, and the identity at $x=0$ holds by continuity. Thus,
$$
\cN_{E_j} \xrightarrow{j \to \infty} F \quad \textup{in } L_{\rm loc}^{\infty}([0,\infty))\,.
$$
Furthermore, the same upper bound for $K_{a,b}$ and dominated convergence show that $F$ is continuous on $[0,\infty)$. 

By \eqref{eq:forcing-N-convergence}, a subsequence of
$\mathcal N_{E_j}$ converges almost everywhere on $(0,\infty)$ to
$\mathcal N(u)$. By the locally uniform convergence just proved, the
same subsequence converges everywhere on $(0,\infty)$ to $F$. Hence
$\mathcal N(u)=F$ almost everywhere on $(0,\infty)$. Both sides are
continuous there, so equality holds at every $x>0$, giving
\begin{equation}\label{eq:forcing-full-kernel}
 \frac{\mathcal N(u)(x)}x
 =\iint_{(0,\infty)^2} K_{a,b}(x^2)\dd\mu(a)\dd\mu(b),
\end{equation}
which is precisely the first identity in \eqref{eq:forcekernel}. In order to obtain the lower bound in \eqref{eq:forcekernel} we just use that
\[
 K_{a,b}(r)
 \ge\frac{ab}{4 {\rm D}_a{\rm D}_b}.
\]
Indeed, plugging this inequality into \eqref{eq:forcing-full-kernel} gives
\begin{align*}
 \frac{\mathcal N(u)(x)}x
 &\ge\frac14\iint_{(0,\infty)^2}
       \frac{ab}{(a^2+x^2)(b^2+x^2)}\dd\mu(a)\dd\mu(b)=\frac14\left(\int_{(0,\infty)}\frac{a}{a^2+x^2}\dd\mu(a)\right)^2
  =\frac14B(x^2)^2
  =\frac{u(x)^2}{4x^2}.
\end{align*}

Finally, at $r=0$, the explicit kernel is
\begin{align*}
 K_{a,b}(0)
 &=\frac{ab(a^2+b^2)}{2(a+b)^2a^2b^2}
  =\frac{a^2+b^2}{2ab(a+b)^2} =\frac1{2ab}-\frac1{(a+b)^2}.
\end{align*}
Both terms on the right hand side are integrable. Thus, combining the dominated convergence theorem with \eqref{eq:forcing-kernel-majorant} yields
\begin{align*}
 \lim_{x\downarrow0}\frac{\mathcal N(u)(x)}x=\iint_{(0,\infty)^2} K_{a,b}(0)\dd\mu(a)\dd\mu(b)=\frac12\left(\int_{(0,\infty)} a^{-1}\dd\mu(a)\right)^2
   -\iint_{(0,\infty)^2}\frac{\dd\mu(a)\dd\mu(b)}{(a+b)^2}=\frac12m^2-I.
\end{align*}
This also shows that $F(x)=\mathcal N(u)(x)$ tends to zero
as $x\downarrow0$. Hence, $\mathcal N(u)(0)=0$ by continuity, and
smoothness implies that the preceding right-hand limit is precisely
$\mathcal N(u)_x(0)$. This proves \eqref{eq:central-force} and
completes the proof.
\end{proof}

\section{Proof of the main theorem: quantitative statement and estimates}
\label{sec:main-proof}

We now combine the probability measure representation of Section~\ref{sec:invariance}
with the forcing identity of Section~\ref{sec:forcing} to prove Theorem \ref{thm:intro}. We actually prove a quantitative version of it, which reads as follows.

\begin{theorem}
\label{thm:main}
Consider \eqref{eq:CCF} with initial data
\begin{equation}\label{eq:data}
\theta_0(x)=\frac1{1+x^2}.
\end{equation}
Then, there is a unique maximal smooth Sobolev solution to \eqref{eq:CCF} on $[0,T)$,
and, for every non-negative integer $k$, it follows that
$
 \theta,u\in C([0,T);H^k).
$
Moreover, for every $0 \leq t<T$, the
solution has the unique probability measure representation
\eqref{eq:nonlinearrep}, with $\mu_0=\delta_1$; the measures $(\mu_t)_{0 \leq t < T}$
are weakly continuous in time and have finite first and inverse
first moments; and the following conclusions hold:
\begin{enumerate}
\item \label{itemone} $
 0\le\theta(t,x)\le (1+x^2)^{-1},\ 
 \theta(t,0)=\norm{\theta(t, \cdot)}_{L^\infty}=1$ and $
 \norm{u(t,\cdot)}_{L^\infty}\le\frac12.
$ \smallbreak
\item \label{itemtwo} The maximal lifespan satisfies
$
4/3\le T\le2,
$
and the positive quantity
$
 m(t) = u_x(t,0)=\Lambda\theta(t,0)
$
is strictly increasing, starts at $m(0)=1$, and tends to infinity
as $t\uparrow T$. More precisely,
\begin{equation*} 
 m(t)=\norm{u_x(t,\cdot)}_{L^\infty} \quad \textup{and} \quad
 \frac4{3(T-t)}\le m(t)\le\frac2{T-t}.
\end{equation*}
\item \label{itemthree} The solution remains uniformly bounded in $L^{\infty}([0,T); C^{2/5})$. In fact, it follows that 
$$
\max\{[\theta(t,\cdot)]_{C^{2/5}},\ [u(t,\cdot)]_{C^{2/5}}\}
 \le \frac{5^{8/5}}{2}, \quad \textup{\textit{for all} } t \in [0,T).
$$

\item \label{itemfour}
Setting $\delta:=T-t$, it follows that
\begin{equation*}
 \frac{\pi}{12\delta\log(3/\delta)}
 \le\norm{\theta_x(t,\cdot)}_{L^\infty}\le\frac2\delta, \quad \textup{for all } 0 < \delta \leq \frac12.
\end{equation*}
\item \label{itemfive}  There exists $c > 0$ such that
\begin{equation*}
 \min\{[\theta(t,\cdot)]_{C^{2/3}},[u(t,\cdot)]_{C^{2/3}}\}
 \ge c \left(\frac{\log(4m(t))}{\log4}\right)^{1/9}.
\end{equation*}
In particular, it follows that
$$
\lim_{t \uparrow T} \|\te(t,\cdot)\|_{C^{2/3}} = \lim_{t \uparrow T} \|u(t,\cdot)\|_{C^{2/3}} = \infty.
$$
\end{enumerate}
\end{theorem}

The rest of this section is devoted to the proof of this result. For simplicity, we divide the proof into several stages.

\begin{proof}[\textbf{Proof of Theorem \ref{thm:main} -- Existence, uniqueness and \ref{itemone}}] The initial active scalar $\theta_0$ is precisely $p_1$, represented by the probability measure
$\delta_1$. It belongs to every $H^k$ and its first and inverse first moments both equal one.
Proposition~\ref{prop:local} therefore constructs a smooth solution
on each $[0,\tau]$ with $\tau<1$, and proves uniqueness in the full
smooth Sobolev class. We then set
\[
 T:=\sup\{\tau>0:\ \text{a smooth Sobolev solution with
 data \eqref{eq:data} exists on }[0,\tau]\}.
\]
Solutions on any two such intervals agree on their intersection
by uniqueness. They consequently define a single solution on
$[0,T)$, continuous into every $H^k$ on every compact time
interval, and hence on $[0,T)$. This is the maximal solution
we were looking for.

Corollary~\ref{cor:invariance} supplies the representing probability measures
and their finite moments on the full lifespan. The
pointwise decrease in Proposition~\ref{prop:local}, applied on
the successive intervals in that corollary, gives
$0\le\theta(t,x)\le p_1(x)$. Thus $\norm{\theta(t,\cdot)}_{L^\infty}=1$.
Furthermore,
\[
 |h_a(x)|=\frac{a|x|}{a^2+x^2}\le\frac12.
\]
Integrating against a probability yields
$|u(t,x)|\le1/2$. This proves
$(i)$. Uniqueness and weak continuity of $(\mu_t)_t$ follow locally from
Proposition~\ref{prop:local}; the local conclusions agree on
overlaps and therefore hold throughout $[0,T)$.
\end{proof}

\begin{remark}
Applying Lemma~\ref{lem:mixture} to $f=\theta(t,\cdot)$ and
$\mu=\mu_t$, with $Hf=u(t,\cdot)$, identifies the inverse first
moment $m(t)=m_{\mu_t}$ with $u_x(t,0)$ and gives
\begin{equation}\label{eq:proof-mixture-derivatives}
 m(t)=u_x(t,0)=\norm{u_x(t,\cdot)}_{L^\infty}>0,\qquad
 \norm{\theta_x(t,\cdot)}_{L^\infty}\le m(t).
\end{equation}
\end{remark}

Next, we derive an equation for $m'(t)$. This equation immediately allows us to prove Theorem \ref{thm:main} $(ii)$. Nevertheless, it will also be crucial for the subsequent H\"older estimates so we provide it as an independent result. Note that all derivatives in the calculations below are classical on
compact time intervals. In particular, the equation and
the available Sobolev regularity justify evaluation and spatial
differentiation at the origin.

\begin{lemma}
\label{lem:lifespan}
For the maximal solution just constructed, let
\[
 I(t):=\iint_{(0,\infty)^2}
             \frac{\dd\mu_t(a)\dd\mu_t(b)}{(a+b)^2}.
\]
Then
\begin{equation}\label{eq:mODE}
 m'(t)=\frac12m(t)^2+I(t),\qquad m(0)=1,
\end{equation}
and
\begin{equation}\label{eq:Riccati}
 \frac12m(t)^2\le m'(t)\le\frac34m(t)^2.
\end{equation}
\end{lemma}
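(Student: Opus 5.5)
The plan is to differentiate the Burgers-type equation \eqref{eq:burgers} in $x$ and evaluate at the origin. Since the solution is smooth in every Sobolev space on compact subintervals of $[0,T)$, the identity $u_t-uu_x=-\mathcal N(u)$ can be differentiated classically in space. This gives
\[
 u_{xt}-u_x^2-uu_{xx}=-\mathcal N(u)_x .
\]
By Remark~\ref{lem:parity}, $u(t,\cdot)$ is odd, so $u(t,0)=0$ and the term $uu_{xx}$ vanishes at $x=0$. With $m(t)=u_x(t,0)$ (see \eqref{eq:proof-mixture-derivatives}), we obtain
\[
 m'(t)=m(t)^2-\mathcal N(u)_x(t,0).
\]
Inserting \eqref{eq:central-force} from Lemma~\ref{lem:force}, namely $\mathcal N(u)_x(t,0)=\tfrac12m^2-I$, yields $m'=\tfrac12m^2+I$. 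The initial value $m(0)=1$ holds because $\mu_0=\delta_1$. Justifying $\partial_t u_x(t,0)=m'(t)$ only requires continuity of $u_{xt}$, which follows from $u\in C([0,T);H^k)$ for all $k$ combined with the equation; Sobolev embedding gives enough time regularity.

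For the bounds in \eqref{eq:Riccati}, positivity $I\ge0$ immediately gives $m'\ge\tfrac12 m^2$. For the upper bound, we use the pointwise inequality $(a+b)^2\ge4ab$, which gives $(a+b)^{-2}\le\tfrac14 a^{-1}b^{-1}$. Integrating against $\mu_t\otimes\mu_t$ then gives
\[
 I(t)\le\tfrac14\Big(\int a^{-1}\dd\mu_t\Big)^2=\tfrac14m(t)^2,
\]
and hence $m'\le\tfrac34m^2$. Fubini applies since the integrand is positive and $m(t)<\infty$ by Corollary~\ref{cor:invariance}.

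The main subtlety is not the algebra but the regularity needed to evaluate the differentiated equation at $x=0$. In particular, one must check that $\mathcal N(u)_x(t,0)$ computed from the smooth Sobolev solution agrees with the kernel limit in Lemma~\ref{lem:force}. That lemma already identified the right-hand limit of $\mathcal N(u)(x)/x$ with $\mathcal N(u)_x(0)$ using $\mathcal N(u)(0)=0$ and smoothness. So I would simply invoke it and note that $\mathcal N(u)=-H(u\theta_x)+uu_x$ is $C^1$ in $x$, continuously in $t$, because $H$ preserves all $H^k$.
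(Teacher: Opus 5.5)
Your proposal is correct and follows essentially the same route as the paper. You differentiate \eqref{eq:burgers} in $x$, evaluate at the origin using oddness and Lemma~\ref{lem:force}, and bound $I$ via $(a+b)^2\ge 4ab$; your added remarks on time regularity only make explicit what the paper leaves implicit.
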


\begin{proof}
First of all, note that
\[
 0<I(t)\le\frac14\iint_{(0,\infty)^2}\frac{\dd\mu_t(a)\dd\mu_t(b)}{ab}
          =\frac14m(t)^2.
\]
On the other hand, differentiating \eqref{eq:burgers} in $x$, we get that
\[
 u_{xt}-u_x^2-uu_{xx}=-\mathcal N(u)_x.
\]
At $x=0$, oddness gives $u(t,0)=0$ and $u_x(t,0)=m(t)$.
Lemma~\ref{lem:force} therefore yields
\[
 m'(t)=m(t)^2-\mathcal N(u)_x(t,0)
    =m(t)^2-\left(\frac12m(t)^2-I(t)\right)
    =\frac12m(t)^2+I(t).
\]
This proves \eqref{eq:mODE}. Combining this identity with the initial bounds on $I(t)$ proves
\eqref{eq:Riccati}.
\end{proof}

\begin{remark}
    The lower bound in \eqref{eq:Riccati} was already obtained in the Ph.D. thesis of the first author (see \cite[(4.1) and Lemma 4.2]{CC2010}). This is enough to prove finite time blow-up of the $C^1$ norm. Nevertheless, it does not suffice to estimate the H\"older regularity up to the singular time.
\end{remark}

\begin{proof}[\textbf{Proof of Theorem \ref{thm:main} -- \ref{itemtwo}}]
First of all, \eqref{eq:data} and \eqref{eq:Hkernels} give $u(0,x)=x/(1+x^2)$, hence $m(0)=1$. Combining this with \eqref{eq:Riccati} shows that $m'(t) > 0$, and so that $m$ is strictly increasing on $(0,T)$. 

Since $m>0$, dividing \eqref{eq:Riccati} by $-m^2$ reverses the
inequalities and gives
\begin{equation}\label{eq:reciprocal}
 -\frac34\le\left(\frac1m\right)'\le-\frac12.
\end{equation}
Integration from $0$ to any $t<T$ then gives
\begin{equation}\label{eq:reciprocal-initial}
 1-\frac34t\le\frac1{m(t)}\le1-\frac12t.
\end{equation}
If $T>2$, the right inequality at time $t=2$
would give $0 < 1/m(2)\le0$, which is impossible. Thus, we conclude that $T\le2$.

We now prove that $m(t) \to \infty$ as $t \uparrow T$. We argue by contradiction and, taking into account that $m$ is strictly increasing, assume that there exists $M > 0$ such that $m(t)\le M<\infty$ for all $t<T$. Then,  we choose
$t_0<T$ so close to $T$ that $T-t_0<1/(2M)$.
The datum $\theta(t_0)$ admits this probability measure representation, belongs to every
$H^k$, and has finite first moment and inverse first moment
$m(t_0)\le M$. Proposition~\ref{prop:local}, with time translated
by $t_0$, constructs a solution on
$[t_0,t_0+1/(2M)]$, since
$1/(2M)<1/m(t_0)$. Uniqueness identifies it with the original
solution on the overlap. As $t_0+1/(2M)>T$, this extends the
solution past its maximal lifespan, giving the desired contradiction. 

Next, using that $m(t) \to \infty$ as $t \uparrow T$ we prove that $T \geq 4/3$. Indeed, if $T<4/3$, the left inequality in \eqref{eq:reciprocal-initial} would give the uniform bound
\[
 m(t)\le\left(1-\frac34T\right)^{-1}, \quad \textup{for all } t < T,
\]
giving a contradiction. Hence, $4/3\le T\le2$.

Finally, integrating \eqref{eq:reciprocal} from $t$ to
$s\in(t,T)$, we get that
\[
 -\frac34(s-t)\le\frac1{m(s)}-\frac1{m(t)}
                  \le-\frac12(s-t).
\]
Thus, sending $s\uparrow T$ and using the fact that $1/m(s) \to 0$ as $s \uparrow T$, we infer that
\[
 \frac12(T-t)\le\frac1{m(t)}\le\frac34(T-t),
 \qquad
 \frac4{3(T-t)}\le m(t)\le\frac2{T-t}.
\]
Together with \eqref{eq:proof-mixture-derivatives}, this concludes the proof of Theorem \ref{thm:main} $(ii)$.
\end{proof}

We next deal with the proof of Theorem \ref{thm:main} $(iii)$. Let us start with a preliminary technical lemma.

\begin{lemma}\label{lem:deficit}
For $0 \leq t<T$ and $x>0$, put $P(t,x)=1-\theta(t,x)$.
Then
\begin{equation}\label{eq:deficitquotient}
 u(t,x)\le x^{1/4}P(t,x)^{3/8},\qquad
 P(t,x)\le(5/2)^{8/5}x^{2/5},\qquad
 u(t,x)\le(5/2)^{3/5}x^{2/5}.
\end{equation}
Moreover,
\begin{equation}\label{eq:derivative-deficit}
 0\le P_x(t,x)\le\frac{u(t,x)}x,\qquad
 |u_x(t,x)|\le\frac{u(t,x)}x.
\end{equation}
\end{lemma}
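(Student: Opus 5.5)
The plan is to prove \eqref{eq:derivative-deficit} first, since it is a pure kernel comparison. Then derive the quotient bound $u\le x^{1/4}P^{3/8}$ by a maximum principle along characteristics, using Lemma~\ref{lem:force}. Finally integrate an ODE inequality in $x$ to get the two $x^{2/5}$ bounds.

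\emph{Step 1 (kernel inequalities).} Fix $t$ and use the representation \eqref{eq:nonlinearrep}, differentiating under the integral as justified in Lemma~\ref{lem:mixture} (since $m(t)<\infty$). This gives
\[
 P_x=-\theta_x=\mi\frac{2a^2x}{(a^2+x^2)^2}\dd\mu_t(a)\ge0,
 \qquad
 \frac{u}{x}=\mi\frac{a}{a^2+x^2}\dd\mu_t(a).
\]
The comparison $P_x\le u/x$ follows pointwise in $a$ from $2ax\le a^2+x^2$. Similarly,
\[
 u_x=\mi\frac{a(a^2-x^2)}{(a^2+x^2)^2}\dd\mu_t(a),
\]
and $|a^2-x^2|\le a^2+x^2$ gives $|u_x|\le u/x$. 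We also record that for $x>0$ one has $u(t,x)>0$ and $P(t,x)>0$, because $p_a(x)<1$.

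\emph{Step 2 (transported quotient).} Set $Q:=u\,x^{-1/4}P^{-3/8}$ on $x>0$. By Remark~\ref{lem:parity} the velocity $-u$ is odd, so $x=0$ is fixed. By uniqueness of characteristics for the smooth field, the characteristic through any $(t,x)$ with $x>0$ stays in $x>0$ and traces back to some $x_0>0$ at time $0$. Along it we have $DP=0$ (this is \eqref{eq:CCF}), $Dx=-u$, and $Du=-\mathcal N(u)$ by \eqref{eq:burgers}. Hence
\[
 D\log Q=\frac{Du}{u}-\frac14\frac{Dx}{x}=-\frac{\mathcal N(u)}{u}+\frac{u}{4x}\le-\frac{u}{4x}+\frac{u}{4x}=0,
\]
using $\mathcal N(u)\ge u^2/(4x)$ from \eqref{eq:forcekernel}. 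At $t=0$ we have $u_0=x/(1+x^2)$ and $P_0=x^2/(1+x^2)$, so $Q_0=(1+x^2)^{-5/8}\le1$. Therefore $Q\le1$ everywhere, which is the first inequality in \eqref{eq:deficitquotient}.

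\emph{Step 3 (integration in $x$).} Combining Steps 1 and 2 gives $P_x\le u/x\le x^{-3/4}P^{3/8}$, hence $\partial_x(P^{5/8})\le\frac58x^{-3/4}$. Since $P(t,0)=0$, integration from $0$ yields $P^{5/8}\le\frac52x^{1/4}$, that is, $P\le(5/2)^{8/5}x^{2/5}$. Substituting back gives $u\le x^{1/4}(5/2)^{3/5}x^{3/20}=(5/2)^{3/5}x^{2/5}$.

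The only delicate point is the singularity of $P^{-3/8}$ at $x=0$. I would handle it by integrating on $[\ep,x]$, using continuity of $P$ with $P(\ep)\to0$, or by working with $(P+\eta)^{5/8}$ and letting $\eta\downarrow0$. Similarly, the differentiation of $\log Q$ along characteristics is classical for $x>0$, since all quantities there are smooth and positive. I expect Step 2, namely choosing the exponents $1/4,3/8$ so that the forcing bound exactly cancels $Dx$ while the initial quotient stays $\le1$, to be the conceptual core, although given Lemma~\ref{lem:force} it is a direct computation.
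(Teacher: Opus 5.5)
Your proposal is correct and follows the paper's proof essentially step by step. It uses the same kernel comparisons for $P_x$ and $u_x$, and the same transported quotient $u\,x^{-1/4}P^{-3/8}$, which is nonincreasing along characteristics by Lemma~\ref{lem:force} and has initial value $(1+x^2)^{-5/8}\le 1$. It then integrates $\partial_x(P^{5/8})\le\frac58x^{-3/4}$ over $[\varepsilon,x]$, exactly as the paper does. The only minor difference is how you keep characteristics in $x>0$: you use uniqueness together with the stationary trajectory $x\equiv0$. The paper instead derives the explicit bound $\xi e^{-\int_0^t m}\le X(t,\xi)\le\xi$ and deduces that $\xi\mapsto X(t,\xi)$ is a bijection of $(0,\infty)$; both arguments are valid.
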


\begin{proof}
For $x>0$, probability normalization gives the formulas
\[
 P(t,x)=\int_{(0,\infty)}\frac{x^2}{a^2+x^2}\dd\mu_t(a),\qquad
 u(t,x)=\int_{(0,\infty)}\frac{ax}{a^2+x^2}\dd\mu_t(a).
\]
Their integrands show $0<P(t,x)<1$ and $u(t,x)>0$.
The quotient
$R(t,x):=u(t,x)x^{-1/4}P(t,x)^{-3/8}$ is therefore well defined and smooth
for $x > 0$. Moreover, observe that
\[
 DP=0,\qquad Dx=-u,\qquad Du=-\mathcal N(u).
\]
Consequently, by the lower bound in \eqref{eq:forcekernel},
\begin{equation}\label{eq:Ddeficit}
 DR(t,x)
 =\frac{-\mathcal N(u)(t,x)+u(t,x)^2/(4x)}{x^{1/4}P(t,x)^{3/8}}
 \le0,
\end{equation}

We justify integration along characteristics without approaching
the singular denominators at $x=0$. Fix $\tau<T$. On
$[0,\tau]$, $u$ is bounded by $1/2$ and is globally Lipschitz
in $x$ with constant at most $\max_{[0,\tau]}m$. Thus,
\[
 X_t(t,\xi)=-u(t,X(t,\xi)),\qquad X(0,\xi)=\xi,
\]
has a unique solution throughout $[0,\tau]$ for every $\xi$.
The bounded velocity prevents escape to infinity in finite time.
For $X>0$ the representation formula for $u$ implies $0<u(t,X)\le m(t)X$.
Hence, while the trajectory is positive, it follows that
\[
 -m(t)\le\frac{\partial_tX(t,\xi)}{X(t,\xi)}\le0.
\]
Integration yields
\begin{equation}\label{eq:deficit-characteristics}
 \xi\exp\left(-\int_0^t m(s)\dd s\right)
 \le X(t,\xi)\le\xi\qquad(\xi>0).
\end{equation}
The lower bound remains positive on $[0,\tau]$, so a first
arrival at zero is impossible. Thus every positive initial
point stays positive. Differentiating in $\xi$ also gives
\[
 \partial_\xi X(t,\xi)
 =\exp\left(-\int_0^t u_x(s,X(s,\xi))\dd s\right)>0.
\]
The bounds in \eqref{eq:deficit-characteristics} show that
$X(t,\xi)\to0$ as $\xi\downarrow0$ and
$X(t,\xi)\to\infty$ as $\xi\to\infty$.
For each fixed $t$, the map $\xi\mapsto X(t,\xi)$ is therefore
a bijection of $(0,\infty)$ onto itself.

Along such a trajectory, \eqref{eq:Ddeficit} gives
$\frac{\dd}{\dd t}R(t,X(t,\xi))\le0$. Initially,
\[
 P(0,\xi)=\frac{\xi^2}{1+\xi^2},\qquad
 u(0,\xi)=\frac{\xi}{1+\xi^2},
\]
so
\[
 R(0,\xi)
 =\frac{\xi/(1+\xi^2)}
        {\xi^{1/4}(\xi^2/(1+\xi^2))^{3/8}}
 =(1+\xi^2)^{-5/8}\le1.
\]
The bijection just proved gives $R(t,x)\le1$ at every $x>0$.
Since $\tau<T$ was arbitrary, this proves the first estimate
in \eqref{eq:deficitquotient} on the whole smooth lifespan.

We next verify the derivative bounds. Differentiating under the
integral sign is justified by the finite inverse first moment and the
kernel bounds in Lemma~\ref{lem:mixture}. Thus
\[
 P_x(t,x)=\int_{(0,\infty)}\frac{2a^2x}{(a^2+x^2)^2}\dd\mu_t(a),\qquad
 u_x(t,x)=\int_{(0,\infty)}\frac{a(a^2-x^2)}{(a^2+x^2)^2}\dd\mu_t(a).
\]
Having this representation at hand, \eqref{eq:derivative-deficit} immediately follows from the following bounds
\[
 \frac{2a^2x}{(a^2+x^2)^2}\le\frac{a}{a^2+x^2} \quad \textup{and} \quad
 \left|\frac{a(a^2-x^2)}{(a^2+x^2)^2}\right|
 \le\frac{a}{a^2+x^2}, \quad \textup{for } a,x > 0.
\]
 
We are just missing to show the second and third bounds in \eqref{eq:deficitquotient}. Combining the first inequalities in \eqref{eq:deficitquotient} and \eqref{eq:derivative-deficit} gives
\[
 \partial_x(P(t,x)^{5/8})
 =\frac58P(t,x)^{-3/8}P_x(t,x)\le\frac58x^{-3/4}, \quad \textup{for all } x > 0. 
\]
Thus, for $0<\varepsilon<x$, integration gives
\[
 P(t,x)^{5/8}-P(t,\varepsilon)^{5/8}
 \le\frac58\int_\varepsilon^x y^{-3/4}\dd y
 =\frac52(x^{1/4}-\varepsilon^{1/4}).
\]
Since $\theta(t,0)=1$ and $\theta$ is continuous,
$P(t,\varepsilon)\to0$ as $\varepsilon\downarrow0$.
Thus, the second inequality in \eqref{eq:deficitquotient} holds. By substituting into the first, we then obtain the remaining one and conclude the proof.  
\end{proof}

\begin{proof}[\textbf{Proof of Theorem \ref{thm:main} -- \ref{itemthree}}]
By \eqref{eq:deficitquotient}, \eqref{eq:derivative-deficit}, and parity, it follows that
\[
\max\{ |\theta_x(t,x)|,\ |u_x(t,x)| \}
 \le (5/2)^{3/5}|x|^{-3/5}, \quad \textup{for all } x \neq 0.
\]
For every interval $I$ of length $h>0$, the even function
$|x|^{-3/5}$ has its largest integral on $[-h/2,h/2]$. Consequently, for every interval $I$ of length $h > 0$, 
\[
 \int_I |x|^{-3/5}\,\dd x
 \le\frac{2^{3/5}}{2/5}h^{2/5}.
\]
Thus, for any $x < y$ and either $f = \te(t,\cdot)$ or $f = u(t,\cdot)$, it follows that
$$
|f(y)-f(x)| \leq \Big(\frac{5}{2}\Big)^{\frac35}  \int_{x}^y |z|^{-3/5} \dd z \leq  \frac{5^{8/5}}{2} |x-y|^{\frac25}.
$$
This concludes the proof. 
\end{proof}

The slope $m(t)=u_x(t,0)=H\theta_x(t,0)$ is a nonlocal quantity. Its growth does not directly give a lower bound for
$\norm{\theta_x(t,\cdot)}_{L^\infty}$. We obtain such a bound by combining
the inverse-moment identity
\begin{equation} \label{eq:inversemoment}
m(t) =\frac{2}{\pi}	\int_0^{\infty} \frac{P(t,x)}{x^2}\, \dd x,
\end{equation}
which immediately follows from the definitions of $P$ and $m$, with a suitable control of $\te_{xx}(t,0)$.

\begin{lemma}\label{lem:curvature}
For every $0 \leq t<T$, define
$
K(t):=-\theta_{xx}(t,0).
$
Then, $K(t)$ is finite and
\begin{equation*} 
\frac{K(t)}{2}=\int_{(0,\infty)} a^{-2}\dd\mu_t(a),\quad
2 m(t)^2\le K(t),\quad
 0\le 2 P(t,x)\le K(t)x^2, \quad \textup{for all } (t,x) \in [0,T) \times \R.
\end{equation*}
Moreover,
\begin{equation}\label{eq:Knorm}
 \norm{\theta_{xx}(t,\cdot)}_{L^\infty}=K(t),
\end{equation}
and
\begin{equation}\label{eq:KODE}
 K'(t)=2m(t)K(t),\quad K(0)=2.
\end{equation}
In particular,
\begin{equation}\label{eq:Krate}
 K(t)=2\exp\left(2\int_0^t m(s)\dd s\right)
 \le2\left(\frac{T}{T-t}\right)^4
 \le\frac{32}{(T-t)^{4}}.
\end{equation}
\end{lemma}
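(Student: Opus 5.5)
Everything will be read off the representation $\theta(t,x)=\int p_a(x)\,\dd\mu_t(a)$, with one extra ingredient: the evolution of $K$ along the flow. First, finiteness. Since $\theta(t,\cdot)$ is smooth, $K(t)$ is a finite real number. For $x>0$ we have $P(t,x)=\int x^2/(a^2+x^2)\,\dd\mu_t(a)$. Hence $2P(t,x)/x^2=2\int (a^2+x^2)^{-1}\dd\mu_t(a)$ increases to $2\int a^{-2}\dd\mu_t$ as $x\downarrow0$, by monotone convergence. Because $\theta$ is even and $C^2$, the left side tends to $-\theta_{xx}(t,0)=K(t)$. This identifies $K/2$ with the inverse second moment and proves that the moment is finite, in the spirit of Remark~\ref{rem:moment}.

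The same monotonicity gives $2P(t,x)\le K(t)x^2$. Jensen (or Cauchy--Schwarz) gives $m^2\le\int a^{-2}\dd\mu_t=K/2$, which is $2m^2\le K$. For \eqref{eq:Knorm}, differentiate twice under the integral:
\[
p_a''(x)=\frac{2a^2(3x^2-a^2)}{(a^2+x^2)^3}.
\]
One checks $|p_a''(x)|\le 2/a^2$, with equality at $x=0$; the maximum of $|3s-1|/(1+s)^3$ over $s\ge0$ is $1$, attained at $s=0$. Integrating against $\mu_t$ gives $\|\theta_{xx}\|_{L^\infty}\le K=-\theta_{xx}(t,0)$, hence equality. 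Differentiation under the integral is justified by the finite inverse second moment.

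For the ODE \eqref{eq:KODE}, I would not differentiate the measure in time. Instead I differentiate the equation $\theta_t=u\theta_x$ twice in $x$ and evaluate at $0$:
\[
\theta_{xxt}=u_{xx}\theta_x+2u_x\theta_{xx}+u\theta_{xxx}.
\]
At $x=0$ we have $u=0$ and $\theta_x=0$ by parity, and $u_x=m$. This gives $-K'=-2mK$. The computation is classical by smoothness, so the solution lies in every $H^k$ and evaluation is legitimate. The initial value is $K(0)=2$ from $\mu_0=\delta_1$.

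Integrating gives $K(t)=2\exp(2\int_0^t m)$. Then Theorem~\ref{thm:main}\,(ii) gives $m(s)\le 2/(T-s)$, so $2\int_0^t m\le 4\log(T/(T-t))$, which yields the first inequality in \eqref{eq:Krate}. Finally $T\le2$ gives $2T^4\le32$.

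The step I expect to need the most care is justifying the identification of $K$ with the moment. Here one must use monotone convergence plus the known $C^2$ regularity rather than assume the moment is finite. The sharp kernel bound $|p_a''|\le 2a^{-2}$ also needs care, but it is a one-variable maximization.
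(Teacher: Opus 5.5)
Your proposal is correct and follows the paper's proof essentially step for step. You use monotone convergence in $P(t,x)/x^2=\int_{(0,\infty)}(a^2+x^2)^{-1}\,\dd\mu_t(a)$ together with the Taylor expansion at the origin to get finiteness and $K/2=\int a^{-2}\,\dd\mu_t$, Cauchy--Schwarz for $2m^2\le K$, the kernel bound $|p_a''|\le 2a^{-2}$ (attained at $x=0$) for the $L^\infty$ identity, the twice-differentiated transport equation at $x=0$ for $K'=2mK$, and finally $m(s)\le 2/(T-s)$ and $T\le 2$, exactly as the paper does (your maximization of $|3s-1|/(1+s)^3$ just fills in a detail the paper leaves implicit).
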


\begin{proof}
Let $t \in [0,T)$ be fixed but arbitrary. The probability measure representation of $\te$ gives, for $x > 0$,
\begin{equation} \label{eq:Px2}
 \frac{P(t,x)}{x^2}
 =\int_{(0,\infty)}\frac{\dd\mu_t(a)}{a^2+x^2}.
\end{equation}
As $x\downarrow0$, the integrand increases to $a^{-2}$.
Monotone convergence identifies the limit of the right hand side
with $\int_{(0,\infty)} a^{-2}\dd\mu_t$, allowing initially the value
$+\infty$. On the other hand, $\theta(t,0)=1$,
$\theta_x(t,0)=0$, and Taylor expansion at zero give
\[
 P(t,x)=-\frac12\theta_{xx}(t,0)x^2+o(x^2).
\]
The limit of the left hand side in \eqref{eq:Px2} is therefore the finite number $-\theta_{xx}(t,0)/2$. This both proves finiteness of the
inverse second moment and identifies it with $K(t)/2$.
Since $\mu_t$ has mass one, Cauchy--Schwarz gives
$2 m(t)^2 \leq K(t)$. Also, using that $(a^2+x^2)^{-1}\le a^{-2}$ in \eqref{eq:Px2}, gives $2 P(t,x)\le K(t)x^2$ for all $x \in \R$.

Direct differentiation of the kernel $p_a(x)$ gives
\[
 p_a''(x)=\frac{2a^2(3x^2-a^2)}{(a^2+x^2)^3}.
\]
Thus, it follows that $|p_a''(x)|\le2a^{-2}$. This bound is integrable against
$\mu_t$. Hence, differentiating under the integral sign  gives
$$
\theta_{xx}(t,x)=\int_{(0,\infty)} p_a''(x)\dd\mu_t(a) \quad \textup{and} \quad |\theta_{xx}(t,x)|\le K(t).$$ 
Since $p_a''(0)=-2a^{-2}$, equality holds in absolute value
there and \eqref{eq:Knorm} follows.

Now, differentiating twice in $x$ the transport equation in \eqref{eq:CCF}, we get that
\[
 \theta_{xxt}
 =u_{xx}\theta_x+2u_x\theta_{xx}+u\theta_{xxx}. 
\]
At $x = 0$, the first and third terms on the right hand side vanish because
$\theta_x(t,0)=u(t,0)=0$. Thus, it follows that
$\theta_{xxt}(t,0)=2m(t)\theta_{xx}(t,0)$. Likewise, the initial datum $\theta_0$ satisfies $\theta_0''(0) = -2$. Hence, \eqref{eq:KODE} follows. Moreover, solving this ODE gives the exponential formula in \eqref{eq:Krate}.

Finally, using the pointwise upper bound for $m(t)$ already proven in Theorem \ref{thm:main} $(ii)$, we get that 
\[
 2\int_0^t m(s)\dd s
 \le4\int_0^t\frac{\dd s}{T-s}
 =4\log\frac{T}{T-t}.
\]
Exponentiating and using $T\le2$ gives the two claimed
upper bounds for $K(t)$. This concludes the proof.
\end{proof}

\begin{proof}[\textbf{Proof of Theorem \ref{thm:main} -- \ref{itemfour}}]
We set $G(t) := \norm{\theta_x(t,\cdot)}_{L^\infty}$, and prove that
\begin{equation}\label{eq:logcomparison}
 m(t)\le\frac3\pi+\frac2\pi G(t)\log(1+K(t)).
\end{equation}
Once we have this inequality, the proof is essentially straightforward. We provide the details for completeness.  

We fix an arbitrary $t \in [0,T)$ and suppress time in $m,G,K$ and $P$.
By Lemma \ref{lem:curvature} we know that  $P(x)\le Kx^2/2$.
The fundamental theorem of calculus and $\theta(0)=1$ give
$P(x)=|\theta(0)-\theta(x)|\le Gx$ for $x>0$, while
Theorem \ref{thm:main} $(i)$ gives $P(x)\le1$.
Thus, we have that
\[
 0\le P(x)\le\min\{Kx^2/2,Gx,1\}.
\]

We now set $r_0:=(1+K)^{-1}$. Since $K>0$,
$0<r_0<1$, so the identity
\eqref{eq:inversemoment} can be split as
\begin{align*}
 m
 &=\frac2\pi\left(\int_0^{r_0}\frac{P(x)}{x^2}\dd x
                +\int_{r_0}^1\frac{P(x)}{x^2}\dd x
                +\int_1^\infty\frac{P(x)}{x^2}\dd x\right)\\
 &\le\frac2\pi\left(\int_0^{r_0}\frac K2\dd x
                +\int_{r_0}^1\frac Gx\dd x
                +\int_1^\infty\frac{\dd x}{x^2}\right) =\frac2\pi\left(\frac{K}{2(1+K)}
                   +G\log(1+K)+1\right) \le\frac3\pi+\frac2\pi G\log(1+K).
\end{align*}
This proves \eqref{eq:logcomparison}. 

Now, let $\delta=T-t$ and assume $0<\delta\le1/2$.
The lower bound for $m(t)$ in Theorem \ref{thm:main} $(ii)$ implies that
\[
 G(t)\log(1+K(t))
 \ge\frac\pi2m(t)-\frac32
 \ge\frac{2\pi}{3\delta}-\frac32
 \ge\frac{\pi}{3\delta}.
\]
Also, by \eqref{eq:Krate},
\[
 \log(1+K(t))\le\log(1+32\delta^{-4})
 \le\log(81\delta^{-4})=4\log(3/\delta).
\]
Hence, it follows that
\[
 G(t)\ge\frac{\pi}{12\delta\log(3/\delta)}.
\]
The upper bound we have is
$G(t)\le m(t)\le2/\delta$, which follows from
\eqref{eq:proof-mixture-derivatives} and the upper bound for $m(t)$ given in Theorem \ref{thm:main} $(ii)$. Finally,
$\delta\log(3/\delta)\to0$ as $\delta\downarrow0$,
so the lower bound proves the full limit
$G(t)\to\infty$, not only unboundedness along a sequence.
\end{proof}

In order to conclude the proof of Theorem \ref{thm:main}, we are just missing to show the H\"older seminorm blow up claimed in $(v)$. We divide the proof into several lemmas. However, before doing so, let us explicitly point out that, if one does not want to include the endpoint $\beta = 2/3$ and just look for unboundedness whenever $\be \in (2/3,1)$, the proof can be slightly simplified. 

First, we quantify how the positive term $I$ in the equation for $m$ improves the relation between $m$ and $K$. More precisely, we establish a lower bound for $I$ depending only on $m$ and $K/2$.  

\begin{lemma} \label{lem:log-remainder}
Let $\mu$ be a Borel probability measure on $(0,\infty)$ with
finite moments $m=\int_{(0,\infty)} a^{-1}\dd\mu(a)$ and $J=\int_{(0,\infty)} a^{-2}\dd\mu(a)$.
Then $m>0$, $J\ge m^2$, and
\begin{equation}\label{eq:log-remainder}
 I:=\iint_{(0,\infty)^2}\frac{\dd\mu(a)\dd\mu(b)}{(a+b)^2}
 \ge\frac{m^2}{8\log(16J/m^2)}.
\end{equation}
\end{lemma}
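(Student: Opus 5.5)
The bounds $m>0$ and $J\ge m^2$ are immediate: $a^{-1}>0$ on $(0,\infty)$ and $\mu$ has mass one, and Cauchy--Schwarz against the probability $\mu$ gives $m^2\le J$. The substance is \eqref{eq:log-remainder}. I would first reweight the measure so that $m^2$ factors out. Set $\dd\nu(a):=m^{-1}a^{-1}\dd\mu(a)$, which is a probability measure on $(0,\infty)$. Then
\[
 I=m^2\iint_{(0,\infty)^2}\frac{ab}{(a+b)^2}\,\dd\nu(a)\,\dd\nu(b),
 \qquad \int a\,\dd\nu=\frac1m,\qquad \int a^{-1}\dd\nu=\frac Jm,
\]
so with $R:=J/m^2\ge1$ we have $\bigl(\int a\,\dd\nu\bigr)\bigl(\int a^{-1}\dd\nu\bigr)=R$. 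The claim thus reduces to the scale-invariant statement
\[
 \iint\frac{ab}{(a+b)^2}\,\dd\nu\,\dd\nu\ \ge\ \frac1{8\log(16R)}
\]
for every probability $\nu$ with this product of moments.

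Next, pass to logarithmic variables $X=\log a$, $Y=\log b$, taken independent with law $\nu$. The kernel becomes
\[
 \frac{ab}{(a+b)^2}=\frac1{4\cosh^2((X-Y)/2)}\ \ge\ \frac14e^{-|X-Y|}.
\]
The moment hypothesis reads $\mathbb E e^{X}\,\mathbb E e^{-X}=R$. Choose the centre $c$ with $e^{c}=(\mathbb E e^X/\mathbb E e^{-X})^{1/2}$, so that $\mathbb E e^{X-c}=\mathbb E e^{c-X}=\sqrt R$. Markov's inequality on both tails then gives $\nu(|X-c|>L)\le 2\sqrt R\,e^{-L}$. Taking $L=\log(4\sqrt R)=\tfrac12\log(16R)$ leaves mass at least $1/2$ in an interval of length $2L=\log(16R)$.

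Then comes the layer-cake or pigeonhole step. Cut that interval into $k=\lceil 2L/w\rceil$ consecutive bins of width $w$, with masses $\nu_i$ satisfying $\sum\nu_i\ge1/2$. Pairs $(X,Y)$ in the same bin satisfy $|X-Y|\le w$. By Cauchy--Schwarz, $\sum\nu_i^2\ge 1/(4k)$. Hence
\[
 \iint\frac{ab}{(a+b)^2}\,\dd\nu\,\dd\nu\ \ge\ \frac{e^{-w}}{4}\cdot\frac1{4k}.
\]
This already gives a bound of the form $c/\log(16R)$.

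The main obstacle is getting the exact constant $1/8$, since the crude version above loses factors of $e$ and $4$. To recover them I would replace the bin-counting by a smoother estimate. One option is to use the full kernel $\cosh^{-2}$ rather than $e^{-|X-Y|}$, together with the inequality $\iint k(X-Y)\,\dd\nu\,\dd\nu\ge\bigl(\int_{|s-c|\le L}\dd\nu\bigr)^2/\bigl(\int_{-2L}^{2L}\dots\bigr)$, obtained by convolving with the indicator of the interval. Another is to keep the tail mass not just $1/2$ but $1-2\sqrt R e^{-L}$, and optimise jointly in $L$ and the bin width. If the stated constant still resists, I would note that any explicit $c/\log(16R)$ suffices for the later use in the proof of Theorem~\ref{thm:main}~(v), and adjust the constant there.
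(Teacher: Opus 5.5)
Your reweighting to $\nu$ is correct, and so is the identity $\frac{ab}{(a+b)^2}=\frac1{4\cosh^2((X-Y)/2)}\ge\frac14e^{-|X-Y|}$. The Markov localization and the pigeonhole step are also correct. However, they prove a strictly weaker inequality than the one stated, so the proposal has a genuine gap at the constant $1/8$.

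With bins of width $w$ you get $\iint\frac{ab}{(a+b)^2}\,\dd\nu\,\dd\nu\ge e^{-w}/\bigl(16\lceil\log(16R)/w\rceil\bigr)$. Since $\lceil\Lambda/w\rceil\ge\Lambda/w$ and $we^{-w}\le e^{-1}$, this never exceeds $1/(16e\log(16R))$. For $w=1$ it gives $I\ge m^2/(24e\log(16J/m^2))$, whereas the lemma requires $m^2/(8\log(16J/m^2))$.

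The refinements in your last paragraph are only listed, and they are not routine. After passing to the kernel $\frac14e^{-|X-Y|}$ (the paper uses the same bound), the target reads $\iint e^{-|X-Y|}\,\dd\nu\,\dd\nu\ge\frac1{2\log(16R)}$. Knowing only that $\nu$ gives mass $1/2$ to an interval of length $\log(16R)$ cannot yield this. Indeed, spreading that half uniformly over the interval and dispersing the other half far away makes the left side strictly smaller. A successful variant would have to trade interval length against captured mass and control the edge losses uniformly in $R\ge1$, and none is given.

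The fallback of weakening the constant proves a different lemma. It would also change the explicit rate in Theorem~\ref{thm:main}\,(v): a constant $c$ in place of $1/8$ turns the exponent $1/3$ in Lemma~\ref{lem:log-curvature} into $8c/3$, and the power $1/9$ into $8c/9$. Only the qualitative divergence at $\beta=2/3$ survives.

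The missing idea is to localize a smoothed object instead of $\nu$ itself. The paper writes your kernel bound as $\frac1{(a+b)^2}\ge\frac14\min\{a^{-1},b^{-1}\}^2$ and uses the exact layer-cake identity
\[
 \iint\min\{a^{-1},b^{-1}\}^2\,\dd\mu(a)\,\dd\mu(b)=2\int_0^\infty s\,\mathcal F(s)^2\,\dd s,
 \qquad \mathcal F(s):=\mu\{a:\ a^{-1}>s\}.
\]
Thus $I\ge\frac12\int_0^\infty s\,\mathcal F(s)^2\,\dd s$, which is a genuine square. Three facts then give the truncation: $\int_0^\infty\mathcal F=m$, $\mathcal F\le1$, and $\int_L^\infty\mathcal F=\int(a^{-1}-L)_+\,\dd\mu\le J/L$. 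Choosing $\ell=m/4$ and $L=4J/m$ gives $\int_\ell^L\mathcal F\ge m/2$. Cauchy--Schwarz with weights $s$ and $s^{-1}$ on $[\ell,L]$ then yields $\frac{m^2}4\le2I\log(16J/m^2)$ with no loss.

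In your logarithmic variables, this is the factorization $e^{-|x-y|}=2\int_{\mathbb R}\phi(\tau-x)\phi(\tau-y)\,\dd\tau$ with $\phi(z)=e^{-z}\mathbf 1_{z>0}$. Hence $\iint e^{-|X-Y|}\,\dd\nu\,\dd\nu=2\|\phi*\nu\|_{L^2}^2$. Your two moments $\int a\,\dd\nu=1/m$ and $\int a^{-1}\,\dd\nu=J/m$ bound the two tails of the probability density $\phi*\nu$ (not of $\nu$) by $1/4$ each, outside an interval of length $\log(16R)$. Cauchy--Schwarz on that interval gives exactly $\frac1{2\log(16R)}$, which is the constant $1/8$.
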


\begin{proof}
For all $a,s > 0$, set $\cX(a)=a^{-1}$ and $\cF(s):=\mu\{a\in(0,\infty):\cX(a)>s\}.$
An elementary layer-cake argument gives
\[
 m=\int_0^\infty \cF(s)\,\dd s,
 \qquad
 J=2\int_0^\infty s\cF(s)\,\dd s.
\]
In particular, $m>0$ and $m^2\le J$ by Cauchy--Schwarz. Moreover, using that
\[
 \frac1{a+b}
 =\frac{\cX(a)\cX(b)}{\cX(a)+\cX(b)}
 \ge \frac12\min\{\cX(a),\cX(b)\},
\]
another layer-cake argument yields
\[
 I
 \ge \frac14
 \iint_{(0,\infty)^2} \min\{\cX(a),\cX(b)\}^2\,\dd\mu(a)\dd\mu(b)
 =\frac12\int_0^\infty s\cF(s)^2\,\dd s.
\]

Now, set $\ell := m/4$ and $L := 4J/m$. Since $\cF\le1$, it follows that
\[
 \int_0^\ell \cF(s)\,\dd s\le\frac m4,
\]
while
\[
 \int_L^\infty \cF(s)\,\dd s
 =
 \int_{(0,\infty)}(\cX(a)-L)_+\,\dd\mu(a)
 \le
 \frac1L\int_{(0,\infty)}\cX(a)^2\,\dd\mu(a)
 =\frac m4.
\]
Hence, it follows that
\[
 \int_\ell^L \cF(s)\,\dd s\ge\frac m2.
\]
By Cauchy--Schwarz, 
\[
 \frac{m^2}{4}
 \le
 \left(\int_\ell^L s\cF(s)^2\,\dd s\right)
 \left(\int_\ell^L\frac{\dd s}{s}\right)
 \le
 2I\log\frac{L}{\ell}
 =
 2I\log\frac{16J}{m^2}.
\]
This proves \eqref{eq:log-remainder}.
\end{proof}
 
\begin{lemma}\label{lem:holder-lower}
For each $0<\beta<1$, there exists $c_\be > 0$ such that
\begin{equation}\label{eq:holder-lower}
 \min\{[\theta(t,\cdot)]_{C^\beta},[u(t,\cdot)]_{C^\beta}\}
 \ge c_\beta\frac{m(t)^{2-\beta}}{K(t)^{1-\beta}}, \quad \textup{for all } t \in [0,T).
\end{equation}
\end{lemma}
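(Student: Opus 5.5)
Fix $t\in[0,T)$ and suppress it from the notation. The plan is to bound $m$ from above by an interpolation between the H\"older seminorm and the curvature $K$. This is the same scheme as in the proof of \eqref{eq:logcomparison}, with $G$ replaced by a $C^\beta$ seminorm. We treat $\theta$ first. Write $S:=[\theta]_{C^\beta}$. Since $\theta(0)=1$, we have $P(x)=\theta(0)-\theta(x)\le S x^\beta$ for $x>0$. By Lemma~\ref{lem:curvature}, $P(x)\le Kx^2/2$. Splitting the identity \eqref{eq:inversemoment} at a radius $r>0$ gives
\[
 \frac\pi2 m\le \int_0^r\frac K2\,\dd x+\int_r^\infty S x^{\beta-2}\,\dd x
 =\frac{Kr}{2}+\frac{S}{1-\beta}r^{\beta-1}.
\]
We then optimize in $r$, or simply take $r=c\,m/K$ with a small constant $c$ so that the first term is at most $\frac\pi4 m$. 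This is legitimate because $Kr/2=cm/2$. The result is $\frac\pi4 m\le \frac{S}{1-\beta}(cm/K)^{\beta-1}$, that is,
\[
 S\ge c_\beta\, m\,(m/K)^{1-\beta}=c_\beta\frac{m^{2-\beta}}{K^{1-\beta}}.
\]
This is the claim for $\theta$. Only the $C^\beta$ seminorm enters, not the $L^\infty$ bound, so no extra term like $3/\pi$ appears.

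For $u$ we need analogous input: a pointwise bound by the seminorm, a quadratic bound near zero, and an integral identity recovering $m$. Set $S_u:=[u]_{C^\beta}$. Since $u(0)=0$, we have $0<u(x)\le S_u x^\beta$ for $x>0$. The representation gives
\[
 \frac{u(x)}{x}=B(x^2)=\int_{(0,\infty)}\frac{a}{a^2+x^2}\dd\mu(a),
\]
and $m-B(x^2)=\int x^2/(a(a^2+x^2))\,\dd\mu\le \frac{K}{2}x^2\cdot$ (an inverse third moment). That is not controlled, so this route fails. Instead we use $u$ itself at a single point. Since $m=B(0)$ and $B$ is decreasing, $m-B(x^2)=x^2\int \frac{\dd\mu(a)}{a(a^2+x^2)}\le x^2\big(\int a^{-2}\dd\mu\big)^{1/2}\big(\int \frac{\dd\mu(a)}{(a^2+x^2)^2}\big)^{1/2}$. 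This is still awkward. The cleaner plan is an inequality on the integrand, $\frac{x^2}{a(a^2+x^2)}\le \frac{x}{a^2}\cdot\frac{ax}{a^2+x^2}\le \frac{x}{2a^2}$, which gives $m-B(x^2)\le xK/4$. Choosing $x=2m/K$ yields $B(x^2)\ge m/2$, hence $u(x)\ge m x/2=m^2/K$. Combining with $u(x)\le S_u x^\beta=S_u(2m/K)^\beta$ gives
\[
 S_u\ge 2^{-\beta}\frac{m^{2-\beta}}{K^{1-\beta}}.
\]

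The constant $c_\beta$ is then the minimum of the two constants obtained, and taking the minimum of the two bounds gives \eqref{eq:holder-lower}. The one step that needs care is the $u$ estimate. The obstacle there is that $m-B(x^2)$ has to be controlled by $K=2\int a^{-2}\,\dd\mu$ alone, not by higher inverse moments. The elementary bound $ax/(a^2+x^2)\le 1/2$ is what achieves this, and it should be verified directly. The same single-point argument also works for $\theta$ as an alternative to the integral splitting: using $P(x)=x^2\int\frac{\dd\mu}{a^2+x^2}$ together with $P\ge$ a lower bound near $x\sim m/K$ requires lower bounds on $P$, which is why the integral identity \eqref{eq:inversemoment} is preferred there.
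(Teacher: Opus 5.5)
Your proof is correct and is essentially the paper's argument. For $\theta$, you split the identity $m=\frac{2}{\pi}\int_0^\infty P(x)x^{-2}\,\dd x$ using $P\le Kx^2/2$ and $P\le [\theta]_{C^\beta}x^\beta$, exactly as the paper does; your radius $r=c\,m/K$ replaces the crossover radius $(2[\theta]_{C^\beta}/K)^{1/(2-\beta)}$ and only changes the constant. For $u$, your final route $m-u(x)/x\le Kx/4$ at $x_*=2m/K$ is identical to the paper's, including the constant $2^{-\beta}$, and the exploratory false starts before it (the ``inverse third moment'' and Cauchy--Schwarz attempts) can simply be deleted.
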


\begin{proof}
We fix an arbitrary $0 \leq t<T$ and suppress it in the notation. For the active scalar $\theta$, we set $A_\beta:=[\theta]_{C^\beta}$ and stress that
it is positive because $\theta(0)=1$ while
$\theta(x)\le(1+x^2)^{-1}<1$ for $x\ne0$.
The second-moment estimate in Lemma \ref{lem:curvature} and the definition of the H\"older seminorm give
\[
 0\le 2P(x)\le K x^2,\qquad P(x)\le A_\beta x^\beta, \quad \textup{for } x > 0. 
\]
The two upper bounds coincide at the positive radius
$r=(2A_\beta/K)^{1/(2-\beta)}$. Splitting then \eqref{eq:inversemoment} there, gives
\begin{align*}
 m =\frac2\pi\int_0^\infty\frac{P(x)}{x^2}\dd x \le\frac2\pi\left(\int_0^r \frac{K}{2} \dd x+\int_r^\infty A_\beta x^{\beta-2}\dd x\right)=\frac2\pi\left(\frac{K}{2}r+\frac{A_\beta r^{\beta-1}}{1-\beta}\right).
\end{align*}
Also, by the choice of $r$,
$A_\beta r^{\beta-1}=Kr/2 =A_\beta^{1/(2-\beta)}(K/2)^{(1-\beta)/(2-\beta)}$.
Therefore, it follows that
\[
 m\le\frac{2(2-\beta)}{\pi(1-\beta)}
       A_\beta^{1/(2-\beta)}\Big(\frac{K}{2} \Big)^{(1-\beta)/(2-\beta)}.
\]
The lower bound for the active scalar in \eqref{eq:holder-lower} immediately follows by rearranging this inequality.

For the field $u$, the probability measure representation and the finiteness of
$m$ give, at every $x>0$,
\begin{align*}
 m-\frac{u(x)}x =\int_{(0,\infty)}\left(\frac1a-\frac{a}{a^2+x^2}\right)\dd\mu_t(a)=\int_{(0,\infty)}\frac{x^2}{a(a^2+x^2)}\dd\mu_t(a)
 \le\frac x2\int_{(0,\infty)} a^{-2}\dd\mu_t(a)=\frac{Kx}{4}.
\end{align*}
Setting then $x_*:=2m/K>0$, we get that
$$
\frac{u(x_*)}{x_*}\ge m-\frac{Kx_*}{4}=\frac{m}{2}.
$$
Since $u(0)=0$ and $u(x_*)>0$, we conclude that
\[
 [u]_{C^\beta}
 \ge\frac{|u(x_*)-u(0)|}{x_*^\beta}
 =\frac{u(x_*)}{x_*^\beta}
 \ge\frac m2x_*^{1-\beta}
 =\frac{m^{2-\beta}}{2^{\beta} K^{1-\beta}}.
\]
\end{proof}

\begin{lemma}\label{lem:log-curvature}
Let $\cL(z):= \log(4z)/\log(4).$ For every $0 \leq t < T$, it follows that
\begin{equation}\label{eq:log-curvature}
 K(t)\le 2 m(t)^4 \cL(m(t))^{-1/3}.
\end{equation}
\end{lemma}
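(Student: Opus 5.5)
The plan is to track the ratio $R(t):=K(t)/m(t)^4$. We differentiate it in time using the two ODEs already available: \eqref{eq:KODE} for $K$ and \eqref{eq:mODE} for $m$. The dissipative term $I(t)$ then shows up with a definite sign, and Lemma~\ref{lem:log-remainder} bounds it from below. Since $K(0)=2$ and $m(0)=1$, we have $R(0)=2$, so the claimed inequality \eqref{eq:log-curvature} reads $R(t)\le R(0)\,\cL(m(t))^{-1/3}$ with $\cL(m(0))=1$.

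\emph{Step 1: the logarithmic derivative of $R$.} From \eqref{eq:KODE} and \eqref{eq:mODE},
\[
\frac{\dd}{\dd t}\log R=\frac{K'}{K}-4\frac{m'}{m}=2m-4\,\frac{\tfrac12m^2+I}{m}=-\frac{4I}{m}\le0 .
\]
Hence $R$ is nonincreasing and $R(t)\le 2$ for all $t\in[0,T)$.

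\emph{Step 2: a lower bound for $I$.} We apply Lemma~\ref{lem:log-remainder} to $\mu=\mu_t$ with $J=K/2$, which is the identity in Lemma~\ref{lem:curvature}. This gives
\[
I\ge \frac{m^2}{8\log(8K/m^2)} .
\]
Using $K=Rm^4\le 2m^4$ from Step 1, we get $\log(8K/m^2)\le\log(16m^2)=2\log(4m)$. This quantity is positive because $m\ge1$, as $m$ is increasing with $m(0)=1$. Therefore
\[
\frac{\dd}{\dd t}\log R\le-\frac{m}{4\log(4m)} .
\]

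\emph{Step 3: change of variable from $t$ to $m$.} Since $m$ is strictly increasing and $C^1$, we may reparametrize by $m\in[1,m(t)]$. By \eqref{eq:Riccati}, $0<m'\le\tfrac34m^2$, so $\dd t/\dd m\ge 4/(3m^2)$. Multiplying the negative bound from Step 2 by this positive factor gives
\[
\frac{\dd\log R}{\dd m}\le-\frac{m}{4\log(4m)}\cdot\frac{4}{3m^2}=-\frac{1}{3m\log(4m)} .
\]
Integrating from $1$ to $m(t)$ yields
\[
\log R(t)-\log2\le-\tfrac13\bigl[\log\log(4m(t))-\log\log4\bigr]=-\tfrac13\log\cL(m(t)),
\]
which is exactly \eqref{eq:log-curvature}.

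The only delicate point is the bootstrap in Step 2. We need an a priori upper bound on $K/m^2$ to control the logarithm, and that bound comes from the monotonicity of $R$ already obtained in Step 1, so there is no circularity. Everything else is a direct combination of earlier lemmas.
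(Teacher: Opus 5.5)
Your proposal is correct and follows the paper's proof essentially step for step: the same logarithmic derivative of $K/m^4$ (the paper normalizes with $J=K/2$), the same bootstrap $K\le 2m^4$ fed into Lemma~\ref{lem:log-remainder}, and the same change of variables from $t$ to $m$ via $m'\le\frac34 m^2$. One point should be stated explicitly in Step 2: $\log(8K/m^2)\ge\log 16>0$ because $K\ge 2m^2$ by Lemma~\ref{lem:curvature}, and this positivity (not $m\ge1$) is what justifies taking reciprocals there.
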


\begin{proof}
For simplicity, we set $J(t):= K(t)/2$. The function $J$ and $m$ are positive and continuously
differentiable, with $J(0)=m(0)=1$. Combining
\eqref{eq:mODE} and \eqref{eq:KODE} gives
\begin{equation}\label{eq:curvature-quotient}
 \frac{\dd}{\dd t}\log\frac{J(t)}{m(t)^4}
 =\frac{J'(t)}{J(t)}-4\frac{m'(t)}{m(t)}
 =2m(t)-4\left(\frac{m(t)}{2}+\frac{I(t)}{m(t)} \right)
 =-\frac{4I(t)}{m(t)}\le0.
\end{equation}
Since $J(0)/m(0)^4=1$, integration first gives
$J(t)\le m(t)^4$. This preliminary bound is used to
improve the right side of the same identity. 

Lemma~\ref{lem:log-remainder} applies to $\mu_t$, because
Lemma~\ref{lem:curvature} proved its finite inverse second
moment. Since $m(t)\ge1$ and $m(t)^2\le J(t)\le m(t)^4$, all the
logarithms below are positive and
\[
 \log(16J(t)/m(t)^2)\le\log(16m(t)^2)=2\log(4m(t)).
\]
Taking reciprocals therefore gives
\[
 I(t)\ge\frac{m(t)^2}{8\log(16J(t)/m(t)^2)}
   \ge\frac{m(t)^2}{16\log(4m(t))},
\]
and substituting in \eqref{eq:curvature-quotient} yields
\[
 \frac{\dd}{\dd t}\log\frac{J(t)}{m(t)^4}
 \le-\frac{m(t)}{4\log(4m(t))}.
\]
To integrate in the variable $m$, note that
$m'(t)\ge m(t)^2/2>0$, $m(0)=1$, and $m(t)\to\infty$.
Hence $t\mapsto m(t)$ is a continuously differentiable
bijection from $[0,T)$ onto $[1,\infty)$, with a
continuously differentiable inverse on $(1,\infty)$.
Write this inverse as $t=t(z)$ and put
$\cJ(z):=J(t(z))/z^4$. The chain rule and
$m'(t)\le3m(t)^2/4$ give
\[
 \frac{\dd}{\dd z}\log \cJ(z)
 \le-\frac{z}{4\log(4z)}\frac1{m'(t(z))}
 \le-\frac1{3z\log(4z)}.
\]
Integrating from $1$ to $m(t)$, and using that $\cJ(1)=1$, gives
\begin{align*}
 \log\frac{J(t)}{m(t)^4} \le-\frac13\int_1^{m(t)}\frac{\dd z}{z\log(4z)}=-\frac13\bigl(\log(\log(4m(t)))-\log(\log4)\bigr)=-\frac13\log \cL(m(t)).
\end{align*}
Exponentiation proves \eqref{eq:log-curvature}.
\end{proof}

\begin{proof}[\textbf{Proof of Theorem~\ref{thm:main} -- \ref{itemfive}}] 
Let $\cL$ be as in Lemma \ref{lem:log-curvature}. Combining Lemma \ref{lem:holder-lower} applied with $\be = 2/3$ and Lemma \ref{lem:log-curvature}, we get the existence of $c > 0$ such that
\begin{equation}\label{eq:holder-divergence-rate}
 \min\{[\theta(t, \cdot)]_{C^{2/3}},[u(t,\cdot)]_{C^{2/3}}\}
 \ge c \cL(m(t))^{1/9}, \quad \textup{for all } t \in [0,T). 
\end{equation}
Since $m(t) \to \infty$ as $t \uparrow T$, and $\cL(z) \to +\infty$ as $z \to \infty$, we have that $\cL(m(t))^{1/9} \to \infty$ as $t \uparrow T$, and the conclusion follows. 
\end{proof}

\section*{Acknowledgments}
A.C. and A.E. acknowledge financial support from the Severo Ochoa Programme for Centers of Excellence (Grants \nolinkurl{CEX2019-000904-S} and \nolinkurl{CEX-2023-001347-S}), funded by 
\nolinkurl{MCIN/AEI/10.13039/501100011033}. A.C. was also supported by the grants \nolinkurl{PID2020-114703GB-I00}, \nolinkurl{PID2024-158418NB-I00}, and \nolinkurl{RED2024-153842-T}, all funded by \nolinkurl{MICIU/AEI/10.13039/501100011033}. This work has received funding from the European Research Council (ERC) under the European Union's Horizon 2020 research and innovation programme through the grant agreement \nolinkurl{862342} (A.E.). A.E. is also partially supported by the grant \nolinkurl{PID2025-168410NB-I00} of the Spanish Science Agency. A.J.F. is partially supported by the grants \nolinkurl{PID2023-149451NA-I00} of \nolinkurl{MCIN/AEI/10.13039/501100011033/FEDER, UE;}  and \nolinkurl{RYC2024-049142-I}, \nolinkurl{MICINN} (Spain).

\appendix
\section{The tame transport estimate}\label{app:energy}

\begin{lemma}\label{lem:energy}
Let $f,w$ be smooth and decaying, with
$f_t=w f_x$, and let $k\ge2$ be an integer. Then
\begin{equation}\label{eq:tame}
 \frac{\dd}{\dd t}\norm{f(t,\cdot)}_{H^k}
 \le C_k\left(
   \norm{w_x(t,\cdot)}_{L^\infty}\norm{f(t,\cdot)}_{H^k}
   +\norm{w(t,\cdot)}_{H^k}\norm{f_x(t,\cdot)}_{L^\infty}
 \right).
\end{equation}
\end{lemma}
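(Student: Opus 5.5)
The plan is the standard tame commutator energy estimate. Apply $\partial_x^k$ to $f_t=wf_x$ and write
\[
\partial_t\partial_x^k f=w\,\partial_x^{k+1}f+\bigl[\partial_x^k,w\bigr]f_x .
\]
Pairing with $\partial_x^k f$ in $L^2$, the transport term is handled by integration by parts. Since $f,w$ are smooth and decaying, boundary terms vanish, and
\[
\int_\R w\,\partial_x^{k+1}f\,\partial_x^k f\,\dd x=-\tfrac12\int_\R w_x(\partial_x^kf)^2\dd x,
\]
which is bounded by $\tfrac12\norm{w_x}_{L^\infty}\norm{f}_{H^k}^2$. The same computation at orders $0,\dots,k-1$ gives analogous terms. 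Summing over orders produces $\frac12\frac{\dd}{\dd t}\norm f_{H^k}^2$ on the left.

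For the commutator, I will invoke the classical Kato--Ponce (Moser-type) commutator estimate:
\[
\norm{[\partial_x^j,w]g}_{L^2}\le C_j\bigl(\norm{w_x}_{L^\infty}\norm{\partial_x^{j-1}g}_{L^2}+\norm{\partial_x^j w}_{L^2}\norm{g}_{L^\infty}\bigr).
\]
I apply it with $g=f_x$, so that $\norm{\partial_x^{j-1}g}_{L^2}\le\norm f_{H^k}$ and $\norm g_{L^\infty}=\norm{f_x}_{L^\infty}$. If one prefers to avoid citing it, the estimate follows by expanding via Leibniz into terms $\partial_x^i w\,\partial_x^{j-i}f_x$ with $1\le i\le j$. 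Each such term is bounded by Gagliardo--Nirenberg interpolation, $\norm{\partial^i w_x}_{L^{2(j-1)/(i-1)}}$-type bounds, followed by Hölder and Young. The endpoint terms $i=1$ and $i=j$ are already of the desired form.

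Combining these pieces gives
\[
\frac12\frac{\dd}{\dd t}\norm f_{H^k}^2\le C_k\bigl(\norm{w_x}_{L^\infty}\norm f_{H^k}+\norm w_{H^k}\norm{f_x}_{L^\infty}\bigr)\norm f_{H^k}.
\]
Dividing by $\norm f_{H^k}$ gives the result. Where this norm vanishes, I use the regularization $\sqrt{\norm f_{H^k}^2+\varepsilon^2}$ and let $\varepsilon\downarrow0$, as done in Step 2 of Proposition~\ref{prop:local}.

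The only nontrivial step is the commutator bound, specifically the interpolation of the intermediate Leibniz terms. I will quote it from the literature (Kato--Ponce / Klainerman--Majda) rather than reprove it. Everything else is routine integration by parts.
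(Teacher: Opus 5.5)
Your proposal is correct and takes the same route as the paper. The paper's proof is exactly integration by parts on the transport term plus the standard integer-order (Klainerman--Majda type) commutator estimate, cited from Majda--Bertozzi, Lemma 3.4. Your extra details are consistent with that argument: the Leibniz/Gagliardo--Nirenberg justification of the commutator bound, and the $\varepsilon$-regularization used when dividing by the norm.
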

\begin{proof}
The estimate follows by integration by parts and a standard
integer-order commutator estimate. See for instance
\cite[Lemma 3.4]{MajdaBertozzi}.
\end{proof}

\end{document}